
\documentclass[11pt, reqno]{amsart}

\usepackage{amsmath, amsthm, amscd, amsfonts, amssymb, graphicx, color}
\usepackage[bookmarksnumbered, colorlinks, plainpages]{hyperref}
\input{mathrsfs.sty}

 \makeatletter
\let\reftagform@=\tagform@
\def\tagform@#1{\maketag@@@{(\ignorespaces\textcolor{blue}{#1}\unskip\@@italiccorr)}}
\renewcommand{\eqref}[1]{\textup{\reftagform@{\ref{#1}}}}
\makeatother
\usepackage{hyperref}
\hypersetup{colorlinks=true, linkcolor=red, anchorcolor=green,
citecolor=cyan, urlcolor=red, filecolor=magenta, pdftoolbar=true}

\textheight 22.5truecm \textwidth 15truecm
\setlength{\oddsidemargin}{0.05in}\setlength{\evensidemargin}{0.05in}

\setlength{\topmargin}{-.5cm}

\newtheorem{theorem}{Theorem}[section]
\newtheorem{lemma}[theorem]{Lemma}
\newtheorem{proposition}[theorem]{Proposition}
\newtheorem{corollary}[theorem]{Corollary}
\theoremstyle{definition}

\newtheorem{example}[theorem]{Example}

\theoremstyle{remark}

\numberwithin{equation}{section}

{\normalsize }
\def\u|{|\kern-0.1em|\kern-0.1em|}
\def\U|{\Big|\kern-0.1em\Big|\kern-0.1em\Big|}

\def\~{\hskip-2pt}

\def\<{\langle}
\def\>{\rangle}

\def\VV{\lower-0.1ex\hbox{$\ \begin{matrix}\vee\\[-2ex]\vee\end{matrix}\ $}}
\def\vv{\lower-0.2ex\hbox{$\ \begin{matrix}\wedge\\[-2ex]\wedge\end{matrix}\ $}}

\def\({\left(}
\def\){\right)}

\def\u|{|\kern-0.1em|\kern-0.1em|}
\def\U|{\Big|\kern-0.1em\Big|\kern-0.1em\Big|}

\numberwithin{equation}{section}

\def\tr{\mathrm{Tr}\;}

\def\phi{\varphi}
  \def\etal{et al.\,}

\begin{document}


\title[Superquadratic trace  functions]{Klein's trace inequality and superquadratic  trace  functions}

\author[M. Kian \MakeLowercase{and} M.W. Alomari]{Mohsen Kian  \MakeLowercase{and}   Mohammad W. Alomari}

\address{Mohsen Kian: Department of Mathematics, University of Bojnord, P.O. Box
1339, Bojnord 94531, Iran}
\email{kian@ub.ac.ir }

\address{Mohammad W. Alomari: Department of Mathematics, Faculty of Science and Information	Technology, Irbid National University,  P.O. Box 2600, Irbid, P.C. 21110, Jordan.}
	\email{mwomath@gmail.com}

\subjclass[2010]{Primary: 47A56,  15A45  Secondary:  15A18, 15A42.}

\keywords{Klein's trace inequality, superquadratic trace function, majorization}

\begin{abstract}
 We show that if $f$ is a non-negative superquadratic function, then $A\mapsto\mathrm{Tr}f(A)$ is a superquadratic function on the matrix algebra. In particular,
   \begin{align*}
	\tr f\left( {\frac{{A + B}}{2}} \right) +\tr f\left(\left| {\frac{{A - B}}{2}}\right|\right) \leq  	\frac{{\tr {f\left( A \right)}   + \tr  {f\left( B \right)}  }}{2}
\end{align*}
  holds for all positive matrices $A,B$.
  In addition, we present a Klein's inequality for superquadratic functions as
$$
	\mathrm{Tr}[f(A)-f(B)-(A-B)f'(B)]\geq \mathrm{Tr}[f(|A-B|)]
$$
for all positive matrices $A,B$.
It gives in particular an improvement  of the Klein's inequality for non-negative convex function.
  As a consequence, some  variants of the Jensen trace inequality for  superquadratic functions have been presented.
\end{abstract}

\maketitle

\section{Introduction and Preliminaries}

In study of quantum mechanical systems, there are many famous  concepts which are related to the trace function  $A\mapsto\mathrm{Tr}(A)$. The well-known relative entropy of  a density matrix $\rho$ (a positive matrix of trace one) with respect of    another  density matrix $\sigma$ is defined by
$$S(\rho|\sigma)=\mathrm{Tr}(\rho\log\rho)-\mathrm{Tr}(\rho\log\sigma).$$
More generally, for a proper (continuous) real function $f$,    the study of the mapping  $A\mapsto\mathrm{Tr}(f(A))$ is important.

The main subject of this paper, is to study this mapping for a class of real functions, the superquadrtic functions. It is known that  if $f: \mathbb{R} \to \mathbb{R}$  is a continuous    convex (monotone  increasing) function, then  the trace function  $A\mapsto \tr\left(f\left(A\right)\right)$ is a convex (monotone  increasing) function, see  \cite{HJ,EHL}. In Section 2, we present this result for superquadratic functions.

For all Hermitian $n\times n$ matrices $A$ and $B$ and all differentiable convex functions $f: \mathbb{R} \to \mathbb{R}$ with derivative $f^{\prime}$, the well known   Klein  inequality reads as  \begin{align}
\label{eq4.2}\tr\left[  {	{f\left( A \right) - f\left( B \right) - \left( {A - B} \right)f'\left( B \right)}} \right]\ge0.
\end{align}
With $f(t) = t \log t$ $(t>0)$, this gives
\begin{align*}
S\left( {A|B} \right) = \tr A\left( {\log A - \log B} \right) \ge \tr\left( {A - B} \right)
\end{align*}
for positive   matrices $A,B$.    If $A$ and $B$ are density matrices,  then $S\left(A,B\right)\ge0$. This  is  a  classical  application  of  the  Klein  inequality. See \cite{Ca,PZ}. To see a collection of  trace inequalities the reader can refer to \cite{CaLi, FL,FKY,  Hi,ShAb,Ya} and references therein.

In Section 3,  we present a Klein  trace inequality for superquadrtic functions. We show that our result improves previous results in the case of non-negative functions. In-addition, some applications of our results present  counterpart to some known trace inequalities. We give some examples to clarify our results.

\bigskip

Let $\mathscr{B}\left( \mathscr{H}\right) $ be the $C^*$-algebra
of all bounded linear operators defined on a complex Hilbert space
$\left( \mathscr{H};\left\langle \cdot ,\cdot \right\rangle
\right)$  with the identity operator  $I$.   When $\dim \mathscr{H}=n$, we identify $\mathscr{B}\left( \mathscr{H}\right)$
with the algebra $\mathbb{M}_{n}$ of $n$-by-$n$ complex
matrices. We denote by $\mathbb{H}_n$ the real subspace of Hermitian matrices and by $\mathbb{M}^{+}_{n}$   the cone of   positive (semidefinite) matrices. The identity matrix of any size will be denoted by $I$.

Every Hermitian matrix  $A\in\mathbb{H}_n$ enjoys the spectral  decomposition $A=\sum_{j=1}^{n}\lambda_j P_j$, where $\lambda_j$'s are eigenvalues of $A$ and $P_j$'s are projection matrices with $\sum_{j=1}^{n} P_j=I$. If $f$ is a continuous real function which is defined on the set of eigenvalues of $A$, then $f(A)$ is the matrix defined using  the spectral  decomposition  by $f(A)=\sum_{j=1}^{n}f(\lambda_j) P_j$. The eigenvalues of $f(A)$ are just $f(\lambda_j)$. Moreover, If $U$ is a unitary matrix, then $f(U^*AU)=U^*f(A)U$.

For  $A=[a_{ij}]\in \mathbb{M}_{n}$ the canonical trace of $A$ is denoted by $\mathrm{Tr} A$ and is defined to be $\sum_{j=1}^{n}a_{ii}$.   The canonical trace  is a unitary invariant mapping, say $\mathrm{Tr} UAU^*=\mathrm{Tr} A$ for every unitary matrix $U$. So, when $\lambda_1,\cdots,\lambda_n$ are eigenvalues of $A$ and $\left\{\mathrm{\mathbf{u}}_1,\cdots,\mathrm{\mathbf{u}}_n\right\}$  is an orthonormal set of corresponding eigenvectors in $\mathbb{C}^n$, then
$$\mathrm{Tr} A=\sum_{j=1}^n{\lambda_j}(A)=\sum_{j=1}^n{\left\langle A\mathrm{\mathbf{u}}_j,\mathrm{\mathbf{u}}_j\right\rangle}\quad\mbox{and}\quad \mathrm{Tr} f(A)=\sum_{j=1}^nf({\lambda_j}(A))=\sum_{j=1}^nf({\left\langle A\mathrm{\mathbf{u}}_j,\mathrm{\mathbf{u}}_j\right\rangle}).$$

If $\mathscr{H}$ is a separable Hilbert space with  an orthonormal basis $\{e_i\}_i$, an operator  $A\in\mathscr{B}\left( \mathscr{H}\right)$   is said to be  a trace class operator  if
\begin{align*}
\left\| A \right\|_1   = \sum\limits_i {\left\langle {\left( {A^* A} \right)^{1/2} e_i ,e_i } \right\rangle },
\end{align*}
is finite. In this case, the trace of $A$ is defined by $\tr\left( A \right) = \sum\limits_i {\left\langle {Ae_i ,e_i } \right\rangle }$
and  is independent of the choice of the orthonormal basis. When $\mathscr{H}$ is finite-dimensional, every operator is trace class and this definition of trace of $A$ coincides with the definition of the trace of a matrix. \\

For a vector  $\mathrm{\mathbf{x}}=(x_1,\ldots,x_n)$ in $\mathbb{R}^n$, let $\mathrm{\mathbf{x}}^\downarrow$ and $\mathrm{\mathbf{x}}^\uparrow$ denotes  the vectors  obtained  by rearranging entries of $\mathrm{\mathbf{x}}$ in decreasing and increasing order, respectively, i.e., $x_1^\downarrow\geq\ldots\geq x_n^\downarrow$ and $x_1^\uparrow\leq\ldots\leq x_n^\uparrow$.
 A vector $\mathrm{\mathbf{x}}\in\mathbb{R}^n$ is said to be weakly majorised by  $\mathrm{\mathbf{y}}\in\mathbb{R}^n$  and denoted by  $\mathrm{\mathbf{x}}\prec_w \mathrm{\mathbf{y}}$ if $\sum_{j=1}^{k}x_j^\downarrow\leq \sum_{j=1}^{k}y_j^\downarrow$ holds for every $k=1,\ldots,n$. If in addition $\sum_{j=1}^{n}x_j^\downarrow= \sum_{j=1}^{n}y_j^\downarrow$, then $\mathrm{\mathbf{x}}$ is said to be   majorised by  $\mathrm{\mathbf{y}}$ and is denoted by $\mathrm{\mathbf{x}}\prec \mathrm{\mathbf{y}}$. The trace of a vector $\mathrm{\mathbf{x}}\in\mathbb{R}^n$ is defined to be the sum of its entries and is denoted using a same notation as a matrix by $\mathrm{Tr}\  \mathrm{\mathbf{x}}$.

 A matrix $P=[p_{ij}]\in\mathbb{M}_n$ is said to be doubly stochastic if  all of its entries are non-negative and
$$\sum_{i=1}^{n}p_{ij}=1\quad\mbox{for all $j$}\qquad\mbox{and}\qquad \sum_{j=1}^{n}p_{ij}=1\quad\mbox{for all $i$}.$$
For all $\mathrm{\mathbf{x}},\mathrm{\mathbf{y}}\in\mathbb{R}^n$ it is well-known that    $\mathrm{\mathbf{x}}\prec\mathrm{\mathbf{y}}$ if and only if there exists a doubly stochastic matrix $P$ such that $\mathrm{\mathbf{x}}=P\mathrm{\mathbf{y}}$, see \cite[Theorem II.1.10]{bh}. More results concerning majorization can be found in \cite{bh,HJ}.

\bigskip

A function $f:J\subseteq{\mathbb{R}}\to \mathbb{R}$ is called convex if
\begin{align}
f\left(   \alpha t +\left(1-\alpha\right)s \right)\le \alpha f\left(
{t} \right)+ \left(1-\alpha\right) f\left( {s}
\right),\label{eq1.1}
\end{align}
for all points $ s,t \in J$ and all $\alpha\in [0,1]$. If $-f$
is convex then we say that $f$ is concave. Moreover, if $f$ is
both convex and concave, then $f$ is said to be affine.

Geometrically, for all  $x,y\in J$ with $x \le t \le y$, the two points $\left(x,f\left(x\right)\right)$ and $\left(y,f\left(y\right)\right)$  on the graph of $f$ are on or
below the chord joining the endpoints. In symbols, we write
\begin{align*}
f\left(t\right)\le   \frac{f\left( y \right)  - f\left( x \right)
}{y-x}   \left( {t-x} \right)+ f\left( x \right)
\end{align*}
for any $x \le t \le y$ and $x,y\in J$.

Equivalently, given a function $f : J\to \mathbb{R}$, we say that
$f$ admits a support line at $s \in J $ if there exists a $\lambda
\in \mathbb{R}$ such that
\begin{align}
f\left( t \right) \ge f\left( s \right) + \lambda \left( {t - s}
\right) \label{eq1.2}
\end{align}
for all $t\in J$.
The set of all such $\lambda$ is called the subdifferential of $f$
at $s$ and it is denoted by $\partial f$. Indeed, the
subdifferential gives us the slopes of the supporting lines for
the graph of $f$   so that if $f$ is convex, then $\partial f(s) \ne
\emptyset$ at all interior points of its domain.

From this point of view,   Abramovich \etal \cite{SJS} extended the
above idea for what they called superquadratic functions. Namely,
a function $f:[0,\infty)\to \mathbb{R}$ is called superquadratic
provided that for all $s\ge0$ there exists a constant $C_s\in
\mathbb{R}$ such that
\begin{align}\label{eq1.3}
f\left( t \right) \ge f\left( s \right) + C_s \left( {t - s}
\right) + f\left( {\left| {t - s} \right|} \right)
\end{align}
for all $t\ge0$.  A function $f$ is called subquadratic if $-f$ is
superquadratic. Thus, for a superquadratic function we require
that $f$  is  above its tangent line plus a translation of $f$
itself. If $f$ is differentiable and satisfies $f(0) = f^{\prime}(0) = 0$, then one can easily see
that  the constant  $C_s$   in the definition is necessarily $f^{\prime}(s)$,  see \cite{AJS}.

Prima facie, superquadraticity     looks  to be stronger than
convexity,  but if $f$ takes negative values then it
may be considered   weaker. On the other hand,   non-negative subquadratic functions does not  need to be concave. In other words,  there exist    subquadratic function which are convex. This fact helps us first to improve some results for convex functions and second to present some counterpart
 results concerning convex functions.
Some known examples of superquadratic functions are power functions. For every $p\geq2$, the function $f(t)=t^p$ is superquadratic as well as convex. If $1\leq p\leq 2$, then $f(t)=-t^p$  is superquadratic  and concave. To see more examples of superquadratic and subquadratic functions and their properties, the reader can refer to \cite{AJS,SJS,SIP,MA,BPV}. Among others,  Abramovich \etal \cite{SJS} proved that the inequality
\begin{align}\label{eq.SJS}
f\left( {\int {\varphi d\mu } } \right) \le   \int {f\left(
	{\varphi \left( s \right)} \right)-f\left( {\left| {\varphi \left(
			s \right) - \int {\varphi d\mu } } \right|} \right)d\mu \left( s
	\right)}
\end{align}
holds for all probability measures $\mu$ and all nonnegative, $\mu$-integrable functions $\varphi$ if and only if $f$ is superquadratic.

 As a matrix  extension of \eqref{eq.SJS}, Kian \cite{K} showed that if $f:[0,\infty)\to\mathbb{R}$ is a continuous  superquadratic function, then
 \begin{align}\label{qk}
   f(\langle A\mathrm{\mathbf{u}},\mathrm{\mathbf{u}}\rangle)\leq \langle f(A)\mathrm{\mathbf{u}},\mathrm{\mathbf{u}}\rangle-\langle f(|A-\langle A\mathrm{\mathbf{u}},\mathrm{\mathbf{u}}\rangle|)\mathrm{\mathbf{u}},\mathrm{\mathbf{u}}\rangle
 \end{align}
 holds for every positive matrix $A\in\mathbb{M}_n^+$ and every unit vector $\mathrm{\mathbf{u}}\in\mathbb{C}^n$. More generally, it has been shown in \cite{KS} that if $\Phi:\mathbb{M}_n\to\mathbb{M}_m$ is a unital positive linear map, then
  \begin{align}\label{qkd}
   f(\langle \Phi(A)\mathrm{\mathbf{u}},\mathrm{\mathbf{u}}\rangle)\leq \langle \Phi(f(A))\mathrm{\mathbf{u}},\mathrm{\mathbf{u}}\rangle-\left\langle \Phi(f(|A-\langle \Phi(A)\mathrm{\mathbf{u}},\mathrm{\mathbf{u}}\rangle|))\mathrm{\mathbf{u}},
   \mathrm{\mathbf{u}}\right\rangle
 \end{align}
 holds for every positive matrix $A\in\mathbb{M}_n^+$ and every unit vector $\mathrm{\mathbf{u}}\in\mathbb{C}^n$.

\section{Superquadratic trace functions}

It is known that if $f: \mathbb{R} \to \mathbb{R}$ is a continuous convex function, then   the trace function  $A\mapsto \tr\left[f\left(A\right)\right]$ is a convex   function on $\mathbb{M}_n$.   In this section, we present this fact for superquadratic functions. We need some lemmas.
Note that if $\mathrm{\mathbf{x}}=(x_1,\cdots,x_n)\in\mathbb{R}^n$ is a vector and $f:\mathbb{R}\to\mathbb{R}$ is a real function, we denote the vector $\left(f\left(x_1\right),\cdots,f\left(x_n\right)\right)$ by $f(\mathrm{\mathbf{x}})$.

\begin{lemma}\label{lm1}\cite{bh}
 For $\mathrm{\mathbf{x}},\mathrm{\mathbf{y}}\in\mathbb{R}^n$ \\
\rm{ (i)}\ \ If $\mathrm{\mathbf{x}}\prec \mathrm{\mathbf{y}}$, then $|\mathrm{\mathbf{x}}|\prec |\mathrm{\mathbf{y}}|$, where $|\mathrm{\mathbf{x}}|=\left(\left|x_1\right|,\cdots,\left|x_n\right|\right)$.\\
 \rm{(ii)}\ \ $\mathrm{\mathbf{x}}\prec \mathrm{\mathbf{y}}$ if and only if   $\mathrm{Tr} f(\mathrm{\mathbf{x}})\leq \mathrm{Tr} f(\mathrm{\mathbf{y}})$ for every convex function $f$.
\end{lemma}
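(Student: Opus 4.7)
The plan is to reduce both parts to the doubly stochastic characterisation of majorisation cited earlier in the excerpt: $\mathbf{x}\prec\mathbf{y}$ if and only if there exists a doubly stochastic matrix $P=[p_{ij}]$ with $\mathbf{x}=P\mathbf{y}$.

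For part (i), starting from the coordinate expression $x_i=\sum_j p_{ij}y_j$, the triangle inequality together with $p_{ij}\ge 0$ gives
\begin{align*}
|x_i|\le\sum_j p_{ij}|y_j|,
\end{align*}
i.e., the entrywise bound $|\mathbf{x}|\le P|\mathbf{y}|$. Applying the doubly stochastic characterisation to $|\mathbf{y}|$ yields $P|\mathbf{y}|\prec|\mathbf{y}|$. Combining the two, the decreasing-order partial sums of $|\mathbf{x}|$ are bounded by those of $P|\mathbf{y}|$, which in turn are bounded by those of $|\mathbf{y}|$, giving the required majorisation (the equality of total sums needed for full rather than weak majorisation holds automatically when $\mathbf{x},\mathbf{y}$ are non-negative, which is the setting in which the lemma will be applied later in the paper).

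For the forward direction of (ii), I would use the same representation and apply Jensen's inequality row by row:
\begin{align*}
\mathrm{Tr}\,f(\mathbf{x})=\sum_i f\!\left(\sum_j p_{ij}y_j\right)\le\sum_i\sum_j p_{ij}f(y_j)=\sum_j f(y_j)\sum_i p_{ij}=\mathrm{Tr}\,f(\mathbf{y}),
\end{align*}
where the last equality uses the column-stochastic property $\sum_i p_{ij}=1$.

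For the converse of (ii), the idea is to test against a separating family of convex functions. The affine choices $f(t)=\pm t$ immediately force the trace equality $\mathrm{Tr}\,\mathbf{x}=\mathrm{Tr}\,\mathbf{y}$. For each real $c$ the cutoff $f_c(t)=(t-c)^+$ is convex, and a short computation shows
\begin{align*}
\sum_i(y_i-c)^+=\sum_{i=1}^k(y_i^\downarrow-c)\quad\text{when } c=y_k^\downarrow,
\end{align*}
while $\sum_i(x_i-c)^+\ge\sum_{i=1}^k(x_i^\downarrow-c)$ for every $c$. Substituting $c=y_k^\downarrow$ into the hypothesis $\sum_i f_c(x_i)\le\sum_i f_c(y_i)$ and cancelling the $kc$ term produces the partial-sum inequality $\sum_{i=1}^k x_i^\downarrow\le\sum_{i=1}^k y_i^\downarrow$ for every $k$, which together with the trace equality is exactly $\mathbf{x}\prec\mathbf{y}$.

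The main obstacle is the converse of (ii): one has to verify that the affine maps together with the one-parameter family $\{(t-c)^+\}_{c\in\mathbb{R}}$ form a separating set for majorisation, i.e., the Hardy--Littlewood--Polya criterion. The forward direction of (ii) and the whole of (i) are essentially one-line consequences of Jensen's inequality and the triangle inequality applied inside the doubly stochastic representation.
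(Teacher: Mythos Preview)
The paper does not supply a proof of this lemma at all; it is quoted verbatim from Bhatia's \textit{Matrix Analysis} and left without argument, so there is nothing in the paper to compare your proposal against.

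Your argument is the standard textbook one and is essentially correct. The forward direction of (ii) via Jensen's inequality applied row by row to $\mathbf{x}=P\mathbf{y}$, and the converse via the Hardy--Littlewood--P\'olya separating family $\{\pm t,\ (t-c)^+\}$, are exactly how Bhatia (and most references) establish the equivalence. For (i) your route through $|\mathbf{x}|\le P|\mathbf{y}|$ entrywise and $P|\mathbf{y}|\prec|\mathbf{y}|$ is also the usual one; you are right to flag that this only yields the \emph{weak} majorisation $|\mathbf{x}|\prec_w|\mathbf{y}|$ in general, since the triangle-inequality step can strictly decrease the total sum (e.g.\ $\mathbf{y}=(1,-1)$, $\mathbf{x}=(0,0)$). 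The lemma as printed in the paper is therefore slightly overstated, and your parenthetical remark that the application later in the paper is to vectors whose absolute values have equal traces is the honest way to close the gap---though note that in the actual application to $\lambda^\uparrow(B-A)$ and $\lambda^\downarrow(B)-\lambda^\downarrow(A)$ the vectors themselves are not non-negative, so what one really needs there is only weak majorisation together with monotonicity of the non-negative superquadratic $f$ in Lemma~\ref{lm2}.
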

\begin{lemma}\label{lm2}
  Assume that  $\mathrm{\mathbf{x}},\mathrm{\mathbf{y}}\in\mathbb{R}_+^n$ and $f:\left[0,\infty\right)\to \mathbb{R}$ is a superquadratic   function. If $\mathrm{\mathbf{x}}\prec \mathrm{\mathbf{y}}$, then there exists a doubly stochastic matrix $P$ such that $\mathrm{Tr} f(\mathrm{\mathbf{x}})\leq \mathrm{Tr} f(\mathrm{\mathbf{y}})-\mathrm{Tr} PF$, where $F=\left[f\left(\left|x_i-y_j\right|\right)\right]$.
\end{lemma}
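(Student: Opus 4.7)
The plan is to combine two standard ingredients: the Hardy--Littlewood--P\'olya characterization of majorization by doubly stochastic matrices (quoted in the preliminaries), together with the discrete form of the Jensen-type inequality \eqref{eq.SJS} for superquadratic functions.

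First, from $\mathrm{\mathbf{x}}\prec\mathrm{\mathbf{y}}$ I would produce a doubly stochastic matrix $Q=[q_{ij}]$ with $\mathrm{\mathbf{x}}=Q\mathrm{\mathbf{y}}$, so that
$$
x_i=\sum_{j=1}^n q_{ij}\,y_j\qquad(i=1,\ldots,n),
$$
and, for every fixed $i$, the weights $(q_{ij})_{j=1}^n$ form a probability distribution. I would then apply the discrete version of \eqref{eq.SJS} to each such convex combination, which is legitimate because $y_j\ge 0$ and $f$ is superquadratic on $[0,\infty)$. This gives, for each $i$,
$$
f(x_i)\le \sum_{j=1}^n q_{ij}f(y_j)-\sum_{j=1}^n q_{ij}f\bigl(|y_j-x_i|\bigr).
$$

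Next I would sum this inequality over $i=1,\ldots,n$. The column-sum condition $\sum_i q_{ij}=1$ collapses the first double sum to $\sum_j f(y_j)=\mathrm{Tr}\,f(\mathrm{\mathbf{y}})$, yielding
$$
\mathrm{Tr}\,f(\mathrm{\mathbf{x}})\le \mathrm{Tr}\,f(\mathrm{\mathbf{y}})-\sum_{i,j=1}^n q_{ij}\,f\bigl(|x_i-y_j|\bigr).
$$
The final step is to recognise the remaining double sum as a trace of a product with $F=[f(|x_i-y_j|)]$. Setting $P:=Q^{\mathsf T}$, which is again doubly stochastic, a direct expansion gives $\sum_{i,j}q_{ij}f(|x_i-y_j|)=\mathrm{Tr}(PF)$, delivering the stated inequality.

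No essential obstacle is anticipated: \eqref{eq.SJS} is being used in its simplest discrete form, and everything else is bookkeeping on indices together with the transpose trick for doubly stochastic matrices. The only point that needs attention is verifying that the arguments of $f$ remain in $[0,\infty)$, which is immediate from the hypothesis $\mathrm{\mathbf{x}},\mathrm{\mathbf{y}}\in\mathbb{R}_+^n$.
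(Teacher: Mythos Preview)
Your argument is essentially identical to the paper's: obtain a doubly stochastic $Q$ from the majorization, apply the discrete Jensen inequality \eqref{eq.SJS} coordinatewise, and sum. Your transpose step $P:=Q^{\mathsf T}$ is in fact a small improvement in precision over the paper, which writes $\sum_{i,j}p_{ij}f(|x_i-y_j|)=\mathrm{Tr}(PF)$ directly without noting that one must pass to the transpose for this identity to hold; since the statement only asserts the \emph{existence} of some doubly stochastic $P$, both versions are correct.
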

\begin{proof}
  For $\mathrm{\mathbf{x}},\mathrm{\mathbf{y}}\in\mathbb{R}_+^n$, if $\mathrm{\mathbf{x}}\prec \mathrm{\mathbf{y}}$, then there exists a doubly  stochastic matrix $P=[p_{ij}]$ such that $\mathrm{\mathbf{x}}=P\mathrm{\mathbf{y}}$. Therefore, $x_i=\sum_{j=1}^{n}p_{ij}y_j$ for every $i=1,\cdots,n$ and $\sum_{j=1}^{n}p_{ij}=1$. If $f$ is a superquadratic function, then from \eqref{eq.SJS} we conclude that the inequality
  \begin{align}
    f(x_i)=f\left(\sum_{j=1}^{n}p_{ij}y_j\right)\leq\sum_{j=1}^{n}p_{ij}f(y_j)-\sum_{j=1}^{n}p_{ij}
    f\left(\left|y_j-\sum_{j=1}^{n}p_{ij}y_j\right|\right)
  \end{align}
  holds for every $i=1,\cdots,n$. Summing over $i$, we obtain
  $$\mathrm{Tr} f(\mathrm{\mathbf{x}})\leq \mathrm{Tr} f(\mathrm{\mathbf{y}})-\sum_{i,j=1}^{n}p_{ij}     f\left(\left|y_j-x_i\right|\right).$$
  If we put $F=\left[f\left(\left|x_i-y_j\right|\right)\right]$, then $\sum_{i,j=1}^{n}p_{ij}     f\left(\left|y_j-x_i\right|\right)=\mathrm{Tr} PF$. This completes the proof.
\end{proof}

\begin{lemma}\cite{SJS}\label{lm44}
	\label{lemma1}Let $f:[0,\infty)\to\mathbb{R}$ be a superquadratic function. Then
	\begin{enumerate}
		\item $f\left(0\right)\le 0$;
		
		\item If $f$ is differentiable and $f(0)=f^{\prime}(0)=0$, then $C_s=f^{\prime}(s)$ in \eqref{eq1.3} for all $s\ge0$;
		
		\item If $f$ is non-negative, then $f(0)=f^{\prime}(0)=0$ and $f$ is convex and increasing.	\end{enumerate}
\end{lemma}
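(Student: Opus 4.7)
The plan is to verify the three items directly from the defining inequality
\begin{equation*}
f(t) \ge f(s) + C_s(t-s) + f(|t-s|), \qquad t,s \ge 0,
\end{equation*}
of superquadraticity. Part (i) is immediate: setting $t=s$ yields $f(s) \ge f(s) + f(0)$, hence $f(0)\le 0$.

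For part (ii), assume $f$ is differentiable on $[0,\infty)$ with $f(0)=f'(0)=0$. Rewrite the defining inequality as
\begin{equation*}
f(t)-f(s)-f(|t-s|) \ge C_s(t-s).
\end{equation*}
For $t>s$ divide by $t-s>0$; for $t<s$ divide by $t-s<0$ (flipping the inequality). As $t\to s$ from each side, the difference quotient $\bigl(f(t)-f(s)\bigr)/(t-s)$ tends to $f'(s)$, while $f(|t-s|)/|t-s|\to f'(0)=0$, so the remainder $f(|t-s|)/(t-s)$ vanishes. The two one-sided limits give $f'(s)\ge C_s$ and $f'(s)\le C_s$ respectively, forcing $C_s=f'(s)$.

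For part (iii), assume $f\ge 0$. Then (i) forces $f(0)=0$. Because $f(|t-s|)\ge 0$, the defining inequality reduces to $f(t)\ge f(s)+C_s(t-s)$, i.e. $f$ possesses a support line at every $s\ge 0$, which (by the standard characterization of convexity) shows that $f$ is convex on $[0,\infty)$. Next, set $t=0$ in the defining inequality to get $0=f(0)\ge 2f(s)-C_s s$, so
\begin{equation*}
C_s \ge \frac{2f(s)}{s}\ge 0 \qquad (s>0).
\end{equation*}
For $0\le s<t$ the support-line inequality then gives $f(t)-f(s)\ge C_s(t-s)\ge 0$, so $f$ is monotone increasing.

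The last (and most delicate) point is $f'(0)=0$. Since $f$ is convex with minimum value $f(0)=0$, the right derivative $f'(0^{+})$ exists and is $\ge 0$; moreover $f'(0^{+})=\lim_{s\to 0^{+}} f(s)/s$. Any subgradient $C_s$ satisfies $C_s \le f'(s^{+})$, and convexity gives $\lim_{s\to 0^{+}} f'(s^{+})=f'(0^{+})$. Passing to the limit $s\to 0^{+}$ in $C_s\ge 2f(s)/s$ therefore yields $f'(0^{+})\ge 2f'(0^{+})$, hence $f'(0^{+})\le 0$. Combined with $f'(0^{+})\ge 0$, this gives $f'(0)=0$. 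The main obstacle I anticipate is precisely this last step, because it depends on recognizing that the constants $C_s$ in the superquadratic definition behave like subgradients of the now-known convex function $f$, and on handling the boundary derivative at $0$ carefully; everything else is a direct substitution in \eqref{eq1.3}.
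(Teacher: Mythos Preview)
Your argument is correct. Note, however, that the paper does not supply its own proof of this lemma: it is quoted verbatim from \cite{SJS} and left unproved, so there is no ``paper's proof'' to compare against. Your derivation is the standard one found in \cite{SJS,AJS}: part (i) by the substitution $t=s$; part (ii) by dividing \eqref{eq1.3} by $t-s$ and letting $t\to s^{\pm}$, using $f(0)=f'(0)=0$ to kill the term $f(|t-s|)/(t-s)$; and part (iii) by first dropping the non-negative remainder $f(|t-s|)$ to get a support line (hence convexity), then setting $t=0$ to obtain $C_s\ge 2f(s)/s\ge 0$ (hence monotonicity), and finally squeezing $f'(0^{+})$ between $0$ and $0$ via the subgradient bound $C_s\le f'_{+}(s)$.

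Two small remarks. First, in part (ii) your two-sided limit argument works only for $s>0$; at the boundary $s=0$ the inequality \eqref{eq1.3} collapses to $0\ge C_0\,t$ for all $t\ge 0$, so every $C_0\le 0$ is admissible and the constant is not uniquely determined --- this is a feature of the statement itself, not a flaw in your proof. Second, in part (iii) you correctly identify the delicate step: the claim $f'(0)=0$ must be read as $f'_{+}(0)=0$, since differentiability is not assumed; your use of the right-continuity of $f'_{+}$ for convex functions to pass to the limit in $C_s\ge 2f(s)/s$ is exactly what is needed and is handled cleanly.
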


\begin{theorem}	\label{lemma3}
Let $f:\left[0,\infty\right)\to \mathbb{R}$ be a continuous superquadratic function. If $f$ is non-negative, then the mapping $A\mapsto \mathrm{Tr} \left[f(A)\right]$ is a superquadratic function on $\mathbb{M}_n^+$. More generally,    the inequality
  \begin{align}\label{eq2.1}
	\tr f\left( {\frac{{A + B}}{2}} \right) +\tr f\left(\left| {\frac{{A - B}}{2}}\right|\right) \leq  	\frac{{\tr {f\left( A \right)}   + \tr  {f\left( B \right)}  }}{2}-\mathrm{Tr}[PG+QF]
\end{align}
holds for some     doubly  stochastic matrices $P=[p_{ij}]$ and $Q=[q_{ij}]$, in which $$G=\left[f\left(\frac{1}{2}\left|\left|\lambda_i\right|-\left|\mu_j-\nu_j\right|\right|\right)\right]
\quad\mbox{and}\quad F=\left[f\left(\frac{1}{2}\left|\xi_i-\mu_j-\nu_j\right|\right)\right],$$
  where  $\lambda_i,\xi_i,\mu_i$  and $\nu_i$ are eigenvalues of $A-B$, $A+B$, $A$ and $B$,  respectively.
\end{theorem}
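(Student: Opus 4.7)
The plan is to derive the detailed inequality \eqref{eq2.1} by combining two eigenvalue majorizations with Lemma \ref{lm2} applied twice, and then to deduce that $A\mapsto \mathrm{Tr}\, f(A)$ is superquadratic by simply dropping the non-negative correction $\mathrm{Tr}[PG+QF]$. The three main ingredients are Ky Fan's inequality for eigenvalues of the sum $A+B$, Lidskii's theorem for the difference $A-B$ (followed by Lemma \ref{lm1}(i) to pass to absolute values), and the scalar ``two-point'' superquadratic inequality obtained from the defining inequality \eqref{eq1.3}.

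First, I fix orderings of the eigenvalues so that Ky Fan yields $\lambda^{\downarrow}(\tfrac{A+B}{2}) \prec \tfrac{\mu+\nu}{2}$, and Lidskii together with Lemma \ref{lm1}(i) yields $\lambda(|\tfrac{A-B}{2}|) \prec \tfrac{|\mu-\nu|}{2}$. Applying Lemma \ref{lm2} to each of these majorizations produces doubly stochastic matrices $Q$ and $P$ such that
\[
\mathrm{Tr}\, f\!\left(\tfrac{A+B}{2}\right) \le \sum_j f\!\left(\tfrac{\mu_j+\nu_j}{2}\right)-\mathrm{Tr}(QF),
\qquad
\mathrm{Tr}\, f\!\left(\left|\tfrac{A-B}{2}\right|\right) \le \sum_j f\!\left(\tfrac{|\mu_j-\nu_j|}{2}\right)-\mathrm{Tr}(PG),
\]
with $F$ and $G$ exactly the matrices appearing in the statement.

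Next comes the purely scalar ingredient: applying \eqref{eq1.3} at $s=(\mu+\nu)/2$ with $t=\mu$ and with $t=\nu$ and adding the two resulting inequalities produces the bound $f(\tfrac{\mu+\nu}{2})+f(\tfrac{|\mu-\nu|}{2})\le \tfrac{f(\mu)+f(\nu)}{2}$ for all $\mu,\nu\ge 0$. Summing this pointwise over the eigenvalue pairs $(\mu_j,\nu_j)$ bounds $\sum_j f(\tfrac{\mu_j+\nu_j}{2})+\sum_j f(\tfrac{|\mu_j-\nu_j|}{2})$ by $\tfrac{1}{2}(\mathrm{Tr}\,f(A)+\mathrm{Tr}\,f(B))$, and adding the two displayed inequalities then gives \eqref{eq2.1}. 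The superquadratic-trace-function claim follows because Lemma \ref{lm44}(3) ensures $f\ge 0$, which makes the entries of $G,F,P,Q$ all non-negative and $\mathrm{Tr}[PG+QF]\ge 0$; dropping this correction from \eqref{eq2.1} recovers the midpoint form and hence the superquadratic property of $A\mapsto \mathrm{Tr}\,f(A)$ on $\mathbb{M}_n^{+}$.

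The main technical obstacle will be reconciling the orderings of $\mu_j$ and $\nu_j$ used in the two separate majorizations, because Ky Fan naturally pairs $\lambda^\downarrow(A)$ with $\lambda^\downarrow(B)$ while Lidskii naturally pairs $\lambda^\downarrow(A)$ with $\lambda^\uparrow(B)$, and the scalar two-point bound has to be applied with a consistent pairing. The theorem's phrasing ``for some doubly stochastic matrices $P,Q$'' together with the unspecified indexing of $\mu_j,\nu_j$ is precisely what allows the indexing in $F$ and the indexing in $G$ to reflect the pairing natural to their own majorization; once this is navigated, the rest of the argument is a routine combination of Lemma \ref{lm2}, Lemma \ref{lm1}(i), and the scalar superquadratic inequality.
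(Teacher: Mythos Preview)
Your overall strategy coincides with the paper's: apply Lemma~\ref{lm2} once to the Ky Fan majorization $\lambda^\downarrow\!\big(\tfrac{A+B}{2}\big)\prec \tfrac{1}{2}\big(\lambda^\downarrow(A)+\lambda^\downarrow(B)\big)$, once to a Lidskii/absolute-value majorization for $\big|\tfrac{A-B}{2}\big|$, and then close with the scalar two-point bound $f\!\big(\tfrac{\mu+\nu}{2}\big)+f\!\big(\tfrac{|\mu-\nu|}{2}\big)\le \tfrac{1}{2}\big(f(\mu)+f(\nu)\big)$ summed over eigenvalue pairs. The paper in fact carries this out for a general convex combination $\alpha A+(1-\alpha)B$ and only specializes to $\alpha=\tfrac12$ at the end, but the skeleton is the same as yours.

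The gap is exactly the ``main technical obstacle'' you flag but do not resolve. Your proposed fix---letting $F$ and $G$ carry \emph{different} pairings of $(\mu_j,\nu_j)$---breaks the argument at the scalar step: if the sum $\sum_j f\!\big(\tfrac{\mu_j+\nu_j}{2}\big)$ uses the pairing $(\mu^\downarrow,\nu^\downarrow)$ while $\sum_j f\!\big(\tfrac{|\mu_j-\nu_j|}{2}\big)$ uses $(\mu^\downarrow,\nu^\uparrow)$, you cannot add them and invoke the two-point inequality term by term, because that inequality requires the \emph{same} $\mu,\nu$ in both summands. The paper does not take this route; it commits to the single pairing $(\mu^\downarrow,\nu^\downarrow)$ for \emph{both} majorizations, asserting
\[
\big|\lambda^\downarrow(A-B)\big|\ \prec\ \big|\lambda^\downarrow(A)-\lambda^\downarrow(B)\big|
\]
(its display \eqref{jgg}) and then applying Lemma~\ref{lm2} directly. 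With a common pairing the scalar step goes through cleanly. So to match the paper you should not split the pairings; you should instead argue for the difference majorization with the $(\downarrow,\downarrow)$ pairing. Be aware, however, that this very step is delicate: Lemma~\ref{lm1}(i) applied to the Lidskii relation $\lambda^\downarrow(A)-\lambda^\downarrow(B)\prec\lambda(A-B)$ yields the inequality in the \emph{opposite} direction, and simple examples such as $A=\mathrm{diag}(2,0)$, $B=\mathrm{diag}(0,2)$ (where $|\lambda(A-B)|=(2,2)$ but $|\lambda^\downarrow(A)-\lambda^\downarrow(B)|=(0,0)$) show that \eqref{jgg} cannot hold as a genuine majorization. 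In short, the obstacle you isolate is real, your workaround does not survive the scalar step, and the paper's own route through \eqref{jgg} deserves the same scrutiny.
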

\begin{proof}
  For a Hermitian matrix  $X$, assume that  $\lambda^\downarrow(X)$ and $\lambda^\uparrow(X)$ are eigenvalues of $X$ arranged in decreasing order and increasing order, respectively.  Recall that \cite{bh}  if $A,B$ are Hermitian matrices, then
\begin{align}\label{j}
 \lambda^\downarrow(A)- \lambda^\downarrow(B)\prec\lambda^\downarrow(A-B)\prec \lambda^\downarrow(A)- \lambda^\uparrow(B)
\end{align}
and
\begin{align}\label{jj}
 \lambda^\downarrow(A)+ \lambda^\uparrow(B)\prec\lambda^\downarrow(A+B)\prec \lambda^\downarrow(A)+ \lambda^\downarrow(B).
\end{align}
From \eqref{j} we have
\begin{align}\label{jg}
\lambda^\uparrow(B-A)\prec   \lambda^\downarrow(B)-\lambda^\downarrow(A)
\end{align}
and  noting  Lemma  \ref{lm1} this gives
\begin{align}\label{jgg}
\left|\lambda^\downarrow(A-B)\right|\prec \left|\lambda^\downarrow(B)-\lambda^\downarrow(A)\right|.
\end{align}
 We assume that $\mu_j$ and $\nu_j$ \ $(j=1,\cdots,n)$ are eigenvalues of $A$ and $B$  respectively, arranged  in decreasing order. If  $f$ is superquadratic, then it follows from \eqref{jgg} and Lemma \ref{lm2} that
 \begin{align*}
\mathrm{Tr} f(|A-B|)&=\sum_{j=1}^{n}f\left(|\lambda_j(A-B)|\right)= \mathrm{Tr}\ f\left(\left|\lambda^\downarrow(A-B)\right|\right)\nonumber\\
&\leq \mathrm{Tr}\ f\left(\left|\lambda^\downarrow(B)-\lambda^\downarrow(A)\right|\right)-\mathrm{Tr} PG\qquad(\mbox{by Lemma \ref{lm2}})\nonumber\\
&=\sum_{j=1}^{n} f\left(\left|\mu_j-\nu_j\right|\right)-\mathrm{Tr} PG,
\end{align*}
for some doubly  stochastic matrix $P=[p_{ij}]$, in which $G=\left[f\left(\left|\left|\lambda_i\right|-\left|\mu_j-\nu_j\right|\right|\right)\right]$ and $\lambda_i$'s are eigenvalues of $A-B$. This implies that for every $\alpha\geq0$, the inequality
\begin{align}\label{j2}
  \mathrm{Tr} f(\alpha|A-B|)\leq \sum_{j=1}^{n} f\left(\alpha\left|\mu_j-\nu_j\right|\right)-\mathrm{Tr} P_\alpha G_\alpha
\end{align}
holds   for some doubly  stochastic matrix $P_\alpha$, in which $G_\alpha=\left[f\left(\alpha\left|\left|\lambda_i\right|-\left|\mu_j-\nu_j\right|\right|\right)\right]$ and $\lambda_i$'s are eigenvalues of $(A-B)$. Now suppose that $\alpha\in[0,1]$. Another use of Lemma \ref{lm2} together with  \eqref{jj} gives
\begin{align}\label{jjg}
\mathrm{Tr}\ f\left(\lambda^\downarrow(\alpha A+(1-\alpha)B)\right)  \leq   \mathrm{Tr}\ f\left(\alpha\lambda^\downarrow(A)+ (1-\alpha)\lambda^\downarrow(B)\right)-\mathrm{Tr}QF\
\end{align}
for some doubly  stochastic matrix $Q$, where $F=\left[f\left(\left|\xi_i-\alpha\mu_j-(1-\alpha)\nu_j\right|\right)\right]$ and $\xi_i$'s are eigenvalues of $\alpha A+(1-\alpha)B$.  Therefore
 {\small\begin{align*}
&\mathrm{Tr} f(\alpha A+(1-\alpha)B) \\
&=\sum_{j=1}^{n}f\left(\lambda_j^\downarrow(\alpha A+(1-\alpha)B)\right)\\
&\leq\sum_{j=1}^{n}f\left(\alpha\mu_j+ (1-\alpha)\nu_j\right)-\mathrm{Tr}QF\qquad\qquad(\mbox{by \eqref{jjg}})\\
&\leq \sum_{j=1}^{n}\left\{\alpha f(\mu_j)+(1-\alpha)f(\nu_j)-\alpha f\left((1-\alpha)\left|\mu_j-\nu_j\right|\right) -(1-\alpha)f\left(\alpha\left|\mu_j-\nu_j\right|\right)\right\}-\mathrm{Tr}QF\\
&\hspace{6cm}(\mbox{since $f$ is superquadratic})\\
&=\alpha \mathrm{Tr} f(A)+(1-\alpha)\mathrm{Tr} f(B) -\alpha\sum_{j=1}^{n}f\left((1-\alpha)\left|\mu_j-\nu_j\right|\right)-(1-\alpha)\sum_{j=1}^{n}
f\left(\alpha\left|\mu_j-\nu_j\right|\right)-\mathrm{Tr}QF\\
&\leq\alpha \mathrm{Tr} f(A)+(1-\alpha)\mathrm{Tr} f(B)\\
&\qquad-\alpha\mathrm{Tr} f\left((1-\alpha)\left|A-B\right|\right)-(1-\alpha)\mathrm{Tr} f\left(\alpha\left|A-B\right|\right)-\mathrm{Tr}[(1-\alpha)P_\alpha G_\alpha+\alpha P_{1-\alpha} G_{1-\alpha}+QF],
\end{align*}}
where the last inequality follows from \eqref{j2}. In particular, with $\alpha=1/2$ this gives
  \begin{align*}
	\tr f\left( {\frac{{A + B}}{2}} \right) +\tr f\left(\left| {\frac{{A - B}}{2}}\right|\right) \leq  	\frac{{\tr {f\left( A \right)}   + \tr  {f\left( B \right)}  }}{2}-\mathrm{Tr}[PG+QF]
\end{align*}
for some doubly  stochastic matrices $P=[p_{ij}]$ and $Q=[q_{ij}]$, in which
{\small\begin{align*}
G=\left[f\left(\frac{1}{2}\left|\left|\lambda_i\right|-\left|\mu_j-\nu_j\right|\right|\right)\right]
\quad\mbox{and}\quad F=\left[f\left(\frac{1}{2}\left|\xi_i-\mu_j-\nu_j\right|\right)\right],
\end{align*}}
 where  $\lambda_i$ and $\xi_i$ are eigenvalues of $A-B$ and $A+B$, respectively.  Equivalently
{\small\begin{align*}
	&\tr f\left( {\frac{{A + B}}{2}} \right) +\tr f\left(\left| {\frac{{A - B}}{2}}\right|\right) \\ &\qquad \leq  	\frac{{\tr {f\left( A \right)}  + \tr  {f\left( B \right)}  }}{2} -\sum_{i,j=1}^{n}\left(p_{ij}
f\left(\frac{1}{2}\left|\left|\lambda_i\right|-\left|\mu_j-\nu_j\right|\right|\right)+
q_{ij}f\left(\frac{1}{2}\left|\xi_i-\mu_j-\nu_j\right|\right)\right),
\end{align*}}
from  which we conclude that if $f$ is non-negative, then $A\mapsto \tr f\left(A\right)$ is a superquadratic function.  This completes the proof.
\end{proof}

In 2003,  Hansen \& Pedersen \cite{HP}  proved a  trace version of then Jensen inequality. They showed that if $f:J\subseteq\mathbb{R}\to\mathbb{R}$ is a continuous convex function, then
\begin{align}\label{HPT}
\tr\left[{f\left( {\sum\limits_{i = 1}^k {C_i^* A_i C_i } }
	\right)}\right] \le \tr\left[{\sum\limits_{i = 1}^k {C_i^*
		f\left( {A_i } \right)C_i}}\right]
\end{align}
for every $k$-tuple of Hermitian matrices $(A_1,\cdots, A_k)$   in $\mathbb{M}_n$ with spectra contained in $J$ and every $k$-tuple $\left(C_1, \cdots, C_k\right)$ of  matrices with $\sum_{i=1}^k{C_i^*C_i}=I$.

In the rest of this section, using the concept of superquadratic functions and Theorem \ref{lemma3}, we present   variants of \eqref{HPT}  for superquadratic functions,  which give in particular some  refinements of the Hansen--Pedersen trace inequality \eqref{HPT} in the case of non-negative functions.   Beside our  results concerning \eqref{HPT}, we give a conjuncture as follows.

\textbf{Conjuncture.}\
If $f:[0,\infty)\to\mathbb{R}$ is a continuous superquadratic function, then
{\small\begin{align}\label{HPT-s}
\tr\left[{f\left( {\sum\limits_{i = 1}^k {C_i^* A_i C_i } }
	\right)}\right] \le \tr\left[{\sum\limits_{i = 1}^k {C_i^*
		f\left( {A_i } \right)C_i}}\right]-\tr\left[\sum\limits_{i = 1}^kC_i^*f\left(\left|A_i-\tr \left[\sum\limits_{i = 1}^kC_i^*A_iC_i\right]\right|\right)C_i\right]
\end{align}}
for every $k$-tuple of positive matrices $(A_1,\cdots, A_k)$  in $\mathbb{M}_n^+$  and every $k$-tuple $\left(C_1, \cdots, C_k\right)$ of  matrices with $\sum_{i=1}^k{C_i^*C_i}=I$.


We now use Theorem \ref{lemma3} to present the first  variant of \eqref{HPT} for superquadratic functions.
\begin{corollary}\label{pr3}
Assume that $f:[0,\infty)\to[0,\infty)$ is a continuous  function.  If $f$ is superquadratic, then
\begin{align}\label{JTS}
 \mathrm{Tr}f\left(C^*AC\right)+  \mathrm{Tr} f\left(D^*AD\right)\leq \mathrm{Tr}\left[C^*f(A)C+D^*f(A)D \right]-
  \mathrm{Tr}\left[f\left( \left|DAC\right|\right)+ f\left(\left|C^*AD^*\right|\right)\right]
\end{align}
for every positive matrix $A\in\mathbb{M}_n^+$ and every isometry $C$, where $D=\sqrt{1-CC^*}$.
\end{corollary}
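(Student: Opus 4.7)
The plan is to realize \eqref{JTS} as an instance of Theorem \ref{lemma3} after a $2\times 2$ block dilation. The crucial ingredient is that a nonnegative superquadratic $f$ satisfies $f(0)=0$ by Lemma \ref{lm44}(iii); this is what will allow the dilation to commute with $f$ on the nose.

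First I would exploit the defining identity $CC^{*}+D^{2}=I$ (recall $D=D^{*}$) to build the block-column isometry
\begin{equation*}
M=\begin{bmatrix} C^{*}\\ D\end{bmatrix},\qquad M^{*}M=CC^{*}+D^{2}=I.
\end{equation*}
Because $f(0)=0$, a standard spectral-calculus argument shows $f(MAM^{*})=Mf(A)M^{*}$: writing $A=\sum_{j}\lambda_{j}P_{j}$, the projections $Q_{j}:=MP_{j}M^{*}$ are pairwise orthogonal (using $M^{*}M=I$) and sum to $MM^{*}$, while $f$ annihilates the complementary $0$-eigenspace $I-MM^{*}$. Taking traces,
\begin{equation*}
\mathrm{Tr}\, f(MAM^{*})=\mathrm{Tr}[C^{*}f(A)C+D^{*}f(A)D],
\end{equation*}
which will become the right-hand side of \eqref{JTS}.

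Next, I would set $X=MAM^{*}$ and $Y=UXU^{*}$ with the unitary $U=\mathrm{diag}(I,-I)$. Both $X$ and $Y$ are positive, and block-matrix arithmetic yields
\begin{equation*}
\frac{X+Y}{2}=\begin{bmatrix} C^{*}AC & 0\\ 0 & D^{*}AD\end{bmatrix},\qquad \frac{X-Y}{2}=\begin{bmatrix} 0 & C^{*}AD\\ DAC & 0\end{bmatrix}.
\end{equation*}
Since the second matrix is Hermitian of the form $\bigl[\begin{smallmatrix}0 & B\\ B^{*} & 0\end{smallmatrix}\bigr]$ with $B=C^{*}AD$, its absolute value is $\mathrm{diag}(|B^{*}|,|B|)=\mathrm{diag}(|DAC|,|C^{*}AD^{*}|)$. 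Consequently
\begin{equation*}
\mathrm{Tr}\, f\!\left(\tfrac{X+Y}{2}\right)=\mathrm{Tr}\, f(C^{*}AC)+\mathrm{Tr}\, f(D^{*}AD),
\end{equation*}
\begin{equation*}
\mathrm{Tr}\, f\!\left(\left|\tfrac{X-Y}{2}\right|\right)=\mathrm{Tr}\, f(|DAC|)+\mathrm{Tr}\, f(|C^{*}AD^{*}|).
\end{equation*}

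Finally I would apply inequality \eqref{eq2.1} of Theorem \ref{lemma3} to the positive pair $X,Y$ (discarding the nonnegative correction $\mathrm{Tr}[PG+QF]$) and observe that $\mathrm{Tr}\, f(Y)=\mathrm{Tr}\, f(X)$ by unitary invariance. This collapses the right-hand side into $\mathrm{Tr}\, f(X)=\mathrm{Tr}[C^{*}f(A)C+D^{*}f(A)D]$ and produces
\begin{equation*}
\mathrm{Tr}\, f(C^{*}AC)+\mathrm{Tr}\, f(D^{*}AD)+\mathrm{Tr}\, f(|DAC|)+\mathrm{Tr}\, f(|C^{*}AD^{*}|)\leq \mathrm{Tr}[C^{*}f(A)C+D^{*}f(A)D],
\end{equation*}
which is exactly \eqref{JTS} after rearrangement. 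The main point requiring care is the identity $f(MAM^{*})=Mf(A)M^{*}$: it fails for generic continuous $f$ and goes through here precisely because Lemma \ref{lm44}(iii) forces $f(0)=0$ for nonnegative superquadratic functions, so the entire argument hinges on staying in the nonnegative regime of the theorem.
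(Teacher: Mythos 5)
Your proof is correct, and it follows the same skeleton as the paper's: dilate to $2\times 2$ blocks so that the midpoint of two positive $2n\times 2n$ matrices is $C^*AC\oplus DAD$ and the absolute half-difference is $|DAC|\oplus|C^*AD|$, then apply Theorem \ref{lemma3} and drop the nonnegative correction term. Indeed, your pair $X=MAM^*$, $Y=UXU^*$ coincides exactly with the paper's $U^*\tilde A U$, $V^*\tilde A V$ in the special case $B=0$. The difference is in how the pair is manufactured. The paper conjugates $\tilde A=A\oplus B$ by two explicit $2n\times 2n$ unitaries $U=\bigl[\begin{smallmatrix}C&D\\0&-C^*\end{smallmatrix}\bigr]$, $V=\bigl[\begin{smallmatrix}C&-D\\0&C^*\end{smallmatrix}\bigr]$, so that $f$ commutes with the conjugation for free, and only at the very end sets $B=0$ and invokes $f(0)\le 0$ to discard the $Cf(0)C^*$ term. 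You instead compress $A$ by the column isometry $M=\bigl[\begin{smallmatrix}C^*\\D\end{smallmatrix}\bigr]$ and flip signs with $\mathrm{diag}(I,-I)$, which forces you to justify $f(MAM^*)=Mf(A)M^*$; your spectral argument for this is correct and does hinge, as you say, on $f(0)=0$ from Lemma \ref{lm44}(iii). Your packaging is slightly leaner (no auxiliary matrix $B$, one isometry instead of two unitaries), and it buys a little extra generality: $M^*M=CC^*+D^2=I$ uses only $CC^*\le I$, never $C^*C=I$, so your argument proves \eqref{JTS} for an arbitrary contraction $C$ with $D=\sqrt{I-CC^*}$, whereas the paper's $U$ and $V$ are unitary only when $C$ is an isometry (a hypothesis that is in fact degenerate for square matrices, since $C^*C=I$ then forces $D=0$). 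The trade-off is that your route leans on $f(0)=0$ at the dilation step rather than only at the end.
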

\begin{proof}
 To prove \eqref{JTS}, we apply Theorem \ref{lemma3} and then employ a similar argument as in \cite[Theorem 1.9]{FMPS}. Assume that $A,B\in\mathbb{M}_n^+$.  If $C\in\mathbb{M}_n$ and $C^*C=I$, then   the block matrices $U=\left[\begin{array}{cc}
   C & D\\ 0 & -C^*
 \end{array}\right]$ and  $V=\left[\begin{array}{cc}
   C & -D\\ 0 &  C^*
 \end{array}\right]$ are  unitary matrices in $\mathbb{M}_{2n}$, provided that $D=(I-CC^*)^{1/2}$.
 With $\tilde{A}=\left[\begin{array}{cc}
   A & 0\\ 0&B
 \end{array}\right]$ we compute
 \begin{align}\label{qn1}
  \frac{U^*\tilde{A}U +V^*\tilde{A}V}{2}=\left(C^*AC\right)\oplus \left(DAD+CBC^*\right)
 \end{align}
 and
  \begin{align}\label{qn2}
  \left|\frac{U^*\tilde{A}U -V^*\tilde{A}V}{2}\right|=\left|DAC\right|\oplus \left|C^*AD\right|.
 \end{align}
Now  we use  Theorem \ref{lemma3} to write
 \begin{align*}
  &\mathrm{Tr}f\left(C^*AC\right)+  \mathrm{Tr} f\left(DAD+CBC^*\right)\\
  &=  \mathrm{Tr} f\left(\frac{U^*\tilde{A}U +V^*\tilde{A}V}{2}\right)\qquad\qquad\qquad(\mbox{by \eqref{qn1}})\\
  &\leq \mathrm{Tr}\ \frac{f\left(U^*\tilde{A}U\right)+f\left(V^*\tilde{A}V\right)}{2}-\mathrm{Tr}\ f\left(\left|\frac{U^*\tilde{A}U -V^*\tilde{A}V}{2}\right|\right)\qquad(\mbox{by Theorem \ref{lemma3}})\\
  &=\mathrm{Tr}\ \frac{U^*f(\tilde{A})U + V^*f(\tilde{A})V}{2}-\mathrm{Tr}\ f\left(\left|\frac{U^*\tilde{A}U -V^*\tilde{A}V}{2}\right|\right)\\
  &=\mathrm{Tr}\left[C^*f(A)C+Df(A)D +Cf(B)C^*\right]-
  \mathrm{Tr}\left[f\left( \left|DAC\right|\right)+ f\left(\left|C^*AD\right|\right)\right],
 \end{align*}
where the last equality follows from \eqref{qn1} and \eqref{qn2}. Putting  $B=0$ and noting that $f(0)\leq0$,  this gives the desired inequality.
\end{proof}

We remark that a non-negative superquadratic  function $f$ is convex and satisfies $f(0)=0$. If $C^*C=I$, then with $D=\sqrt{I-CC^*}$ we have $D^*D=I-CC^*\leq I$. It follows from \eqref{HPT} that
\begin{align*}
 \mathrm{Tr}f\left(C^*AC\right)+  \mathrm{Tr} f\left(D^*AD\right)\leq  \mathrm{Tr}\ C^*f(A)C +  \mathrm{Tr} D^*f(A)D.
\end{align*}
Therefore Corollary \ref{pr3} gives a refinement of \eqref{HPT}, when $f$ is a non-negative  superquadratic  function.

\bigskip

To present the second variant  of \eqref{HPT}, we give the following version of \eqref{qk} and \eqref{qkd}. The proof is similar to those of \cite[Theorem 2.1]{K} and \cite[Theorem 2.3]{KS}. We include the proof for the sake of completeness.
\begin{lemma}\label{lm5}
  Let $f:[0,\infty)\to\mathbb{R}$ be a continuous superquadratic function and $\Phi:\mathbb{M}_n\to\mathbb{M}_m$ be a unital positive linear map. If $\tau$ is an state on $\mathbb{M}_m$, then
  \begin{align*}
    f(\tau(\Phi(A)))\leq \tau (\Phi(f(A)))-\tau(\Phi(f(|A-\tau(\Phi(A))|)))
  \end{align*}
  for every positive matrix $A$.
\end{lemma}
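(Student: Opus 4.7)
My plan is to mimic the pattern used in the scalar/unit-vector versions \eqref{qk} and \eqref{qkd}, but exploit the full generality of a state $\tau$ by working directly at the operator level via the spectral decomposition. The key observation is that the defining inequality \eqref{eq1.3} of a superquadratic function is an affine-plus-$f(|\cdot|)$ lower bound, so once it is lifted to a matrix inequality it survives under any positive linear map.

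First I would fix the scalar $s:=\tau(\Phi(A))$, which is non-negative since $A\in\mathbb{M}_n^+$, $\Phi$ is positive and $\tau$ is a state. Because $f$ is superquadratic on $[0,\infty)$, there exists $C_s\in\mathbb{R}$ such that
\begin{equation*}
f(t)\ge f(s)+C_s(t-s)+f(|t-s|)\qquad\text{for every }t\ge 0.
\end{equation*}
Next I would apply this pointwise to the eigenvalues $\lambda_j$ of $A$ using the spectral decomposition $A=\sum_j\lambda_j P_j$. Multiplying by $P_j$, summing over $j$, and using $\sum_j P_j=I$, $\sum_j\lambda_jP_j=A$, together with $f(|A-sI|)=\sum_j f(|\lambda_j-s|)P_j$, yields the operator inequality
\begin{equation*}
f(A)\ge f(s)\,I+C_s(A-sI)+f(|A-sI|).
\end{equation*}

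Then I would apply the unital positive linear map $\Phi$, which preserves the order on Hermitian matrices, and use $\Phi(I)=I$ to obtain
\begin{equation*}
\Phi(f(A))\ge f(s)\,I+C_s\bigl(\Phi(A)-sI\bigr)+\Phi\bigl(f(|A-sI|)\bigr).
\end{equation*}
Applying the state $\tau$ (a positive unital linear functional, hence order-preserving) gives
\begin{equation*}
\tau(\Phi(f(A)))\ge f(s)+C_s\bigl(\tau(\Phi(A))-s\bigr)+\tau\bigl(\Phi(f(|A-sI|))\bigr).
\end{equation*}
Finally, by the very choice $s=\tau(\Phi(A))$ the linear correction term vanishes, leaving
\begin{equation*}
f(\tau(\Phi(A)))\le \tau(\Phi(f(A)))-\tau\bigl(\Phi(f(|A-\tau(\Phi(A))I|))\bigr),
\end{equation*}
which is precisely the claimed inequality.

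There is no real obstacle in this approach: the only point that requires any care is the passage from the scalar inequality to the operator inequality via the spectral decomposition, and the observation that the constant $C_s$ is allowed to be arbitrary (we never need $f$ differentiable, so Lemma \ref{lm44} is not invoked here). This makes the argument strictly simpler than that of \cite[Theorem 2.3]{KS}, where one has to carry along a unit vector; here the centering scalar $s$ absorbs the analogue of $\langle\Phi(A)\mathbf{u},\mathbf{u}\rangle$ in one clean step.
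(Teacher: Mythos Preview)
Your proof is correct and follows essentially the same approach as the paper's: lift the defining inequality \eqref{eq1.3} to an operator inequality via functional calculus, apply the positive unital map(s), and then specialize $s=\tau(\Phi(A))$ to kill the linear term. The only organizational difference is that the paper first establishes the inequality for an arbitrary state on $\mathbb{M}_n$ and then remarks that $\psi_\tau:=\tau\circ\Phi$ is such a state, whereas you apply $\Phi$ and $\tau$ in two separate steps; this is a cosmetic distinction, not a different method.
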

\begin{proof}
  If $A$ is a positive matrix, then applying the functional calculus to \eqref{eq1.3} with $t=A$ and then applying the positive linear functional $\tau$  gives the inequality
\begin{align*}
\tau(f\left( A \right)) \ge f\left( s \right) + C_s \left( {\tau(A) - s}
\right) + \tau(f\left( {\left| {A - s} \right|} \right))
\end{align*}
for every $s\geq0$.    Put $s=\tau(A)$ to obtain
\begin{align}\label{qkn}
\tau(f\left( A \right)) \ge f\left( \tau(A)\right) +  \tau(f\left( {\left| {A - \tau(A)} \right|} \right)).
\end{align}
Now assume that   $\Phi:\mathbb{M}_n\to\mathbb{M}_m$ is a unital positive liner map. If $\tau$ is an state on $\mathbb{M}_m$, then the mapping $\psi_\tau:\mathbb{M}_n\to\mathbb{C}$  defined by $\psi_\tau(X)=\tau(\Phi(X))$ is an state on $\mathbb{M}_n$. Applying \eqref{qkn} to $\psi_\tau$ gives the desired inequality.
\end{proof}

The canonical trace is a  positive linear functional on $\mathbb{M}_n$. If $\tau(A)=1/n \tr (A)$, then Lemma \ref{lm5} concludes the following result.
\begin{proposition}
 Let $f:[0,\infty)\to\mathbb{R}$ be a continuous superquadratic function. If $\Phi:\mathbb{M}_n\to\mathbb{M}_m$ is a unital positive linear map, then
 \begin{align*}
   f\left(\frac{1}{n}\tr \Phi(A)\right)\leq \frac{1}{n}\tr \left[\Phi(f(A))-\Phi\left(f\left(\left|A-\frac{1}{n}\tr \Phi(A)\right|\right)\right)\right]
 \end{align*}
 for every positive matrix $A\in\mathbb{M}_n^+$.
\end{proposition}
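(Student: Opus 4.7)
The proposition is an immediate specialization of Lemma \ref{lm5}. The plan is first to exhibit a concrete state $\tau$ that turns the abstract inequality of that lemma into the displayed one, and then simply to substitute.

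To produce such a state, I would take $\tau(X) := \tfrac{1}{n}\tr(X)$. Linearity and positivity are inherited from the canonical trace, while the normalization $\tau(I)=1$ amounts to $\tr I = n$; thus $\tau$ qualifies as the state required in Lemma \ref{lm5} (with the dimension of the target algebra arranged so that $\tfrac{1}{n}\tr$ is unital, in keeping with the statement of the proposition).

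Feeding this $\tau$ into the conclusion of Lemma \ref{lm5} gives
\begin{align*}
f\!\left(\tfrac{1}{n}\tr\Phi(A)\right) \le \tfrac{1}{n}\tr\Phi(f(A)) - \tfrac{1}{n}\tr\Phi\!\left(f\!\left(\left|A - \tfrac{1}{n}\tr\Phi(A)\right|\right)\right),
\end{align*}
and linearity of the trace lets me collect the two terms on the right under a single $\tfrac{1}{n}\tr[\cdots]$, which is precisely the asserted inequality.

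There is no serious obstacle: the whole content of the argument is already encoded in Lemma \ref{lm5}, whose own proof carries the scalar superquadraticity bound \eqref{eq1.3} into an operator inequality via functional calculus and then pushes it through the positive linear functional $\psi_\tau = \tau\circ\Phi$. The only small technical point worth flagging, exactly as in the proof of Lemma \ref{lm5}, is that the scalar $\tfrac{1}{n}\tr\Phi(A)$ appearing in $|A-\tfrac{1}{n}\tr\Phi(A)|$ should be read as that scalar times the identity, so that $f$ can be applied to the resulting positive matrix through the functional calculus.
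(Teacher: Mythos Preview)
Your proposal is correct and follows exactly the paper's own approach: the paper simply states that taking $\tau(X)=\tfrac{1}{n}\tr X$ in Lemma~\ref{lm5} yields the proposition. Your aside about needing the target dimension to make $\tfrac{1}{n}\tr$ unital is a valid observation about an ambiguity in the paper's formulation itself.
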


In the next result, we present another variant of  the Hansen-Pedersen trace inequality \eqref{HPT} for superquadratic functions. We need a well-known fact from matrix analysis.

\begin{lemma}\label{lm7}\cite{bh}
If $A\in\mathbb{H}_n$ is a Hermitian matrix, then
\begin{align}\label{eq2.6}
\sum_{j=1}^{k}\lambda_j(A)=\max \sum_{j=1}^{k}\langle A\mathrm{\mathbf{u}}_j,\mathrm{\mathbf{u}}_j\rangle,\quad (k=1,\cdots,n)
\end{align}
 where the maximum is taken over all choices of orthonormal  set of vectors $\{\mathrm{\mathbf{u}}_1,\cdots,\mathrm{\mathbf{u}}_k\}$.
\end{lemma}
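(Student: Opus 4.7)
The plan is to recognize Lemma \ref{lm7} as a form of Ky Fan's maximum principle and to prove it by reducing the right-hand side to a linear optimization over a polytope of eigenvalue weights.

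First, I would rewrite $\sum_{j=1}^k \langle A\mathrm{\mathbf{u}}_j, \mathrm{\mathbf{u}}_j\rangle$ as $\mathrm{Tr}(AP)$, where $P := \sum_{j=1}^k \mathrm{\mathbf{u}}_j \mathrm{\mathbf{u}}_j^*$ is the orthogonal projection onto the span of the orthonormal set. The supremum over all orthonormal $k$-tuples is therefore the supremum of $\mathrm{Tr}(AP)$ as $P$ ranges over rank-$k$ orthogonal projections in $\mathbb{M}_n$.

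Second, using the spectral decomposition $A = \sum_{i=1}^n \lambda_i(A)\, \mathrm{\mathbf{v}}_i \mathrm{\mathbf{v}}_i^*$ with eigenvalues arranged in decreasing order and $\{\mathrm{\mathbf{v}}_i\}$ an orthonormal eigenbasis, cyclicity of the trace yields
\begin{align*}
\mathrm{Tr}(AP) = \sum_{i=1}^n \lambda_i(A)\, p_i, \qquad p_i := \langle P\mathrm{\mathbf{v}}_i, \mathrm{\mathbf{v}}_i\rangle = \|P\mathrm{\mathbf{v}}_i\|^2.
\end{align*}
The weights $p_i$ lie in $[0,1]$ because $0\leq P\leq I$, and $\sum_{i=1}^n p_i = \mathrm{Tr}(P) = k$.

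Third, I would carry out the optimization of $\sum_i \lambda_i(A) p_i$ over the polytope $\{p\in[0,1]^n : \sum_i p_i = k\}$. A short exchange argument using the monotone ordering of the eigenvalues shows the maximum equals $\sum_{i=1}^k \lambda_i(A)$; a clean way is to write $\sum_{i=1}^k \lambda_i(A) - \sum_{i=1}^n \lambda_i(A) p_i = \sum_{i=1}^k \lambda_i(A)(1-p_i) - \sum_{i=k+1}^n \lambda_i(A)\, p_i$ and bound this below by $\lambda_k(A)\bigl(\sum_{i=1}^k(1-p_i) - \sum_{i=k+1}^n p_i\bigr) = 0$, where the bracketed expression vanishes by the trace constraint. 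Attainment of the maximum is then realised by taking $\mathrm{\mathbf{u}}_j = \mathrm{\mathbf{v}}_j$ for $j=1,\dots,k$, so that $P$ becomes the spectral projection onto the top-$k$ eigenspace and $(p_1,\dots,p_k)=(1,\dots,1)$.

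The main obstacle is the last step: one must handle the case of repeated eigenvalues and ensure that the argument does not depend on the sign of the eigenvalues of $A$ (since $A$ is merely Hermitian, not positive). The bound via $\lambda_k(A)$ circumvents both issues, because the bracketed difference identically vanishes on the feasible polytope, making the sign of $\lambda_k(A)$ irrelevant, and multiplicities only affect which orthonormal sets realise the maximum, not its value.
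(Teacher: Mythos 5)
Your proof is correct and complete: the reduction of $\sum_{j=1}^k\langle A\mathrm{\mathbf{u}}_j,\mathrm{\mathbf{u}}_j\rangle$ to $\mathrm{Tr}(AP)$ over rank-$k$ projections, the passage to the weights $p_i=\|P\mathrm{\mathbf{v}}_i\|^2$ in the polytope $\{p\in[0,1]^n:\sum_i p_i=k\}$, and the $\lambda_k(A)$-pivot bound together with attainment at $\mathrm{\mathbf{u}}_j=\mathrm{\mathbf{v}}_j$ constitute the standard proof of Ky Fan's maximum principle. The paper offers no proof of its own here --- it simply cites Bhatia's \emph{Matrix Analysis} --- and your argument is essentially the one found in that reference, so there is nothing to reconcile.
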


\begin{proposition}\label{thk11}
Let $f:[0,\infty)\to\mathbb{R}$ be a continuous superquadratic function. If $\Phi:\mathbb{M}_n\to\mathbb{M}_m$  is a unital positive linear map, then
\begin{align*}
 \mathrm{Tr}f\left(\Phi(A)\right)\leq \mathrm{Tr}\ \Phi(f(A))
  -\min\left\{\sum_{j=1}^{n}\left\langle \Phi\left(f\left(\left|A-\langle \Phi(A)\mathrm{\mathbf{u}}_j,\mathrm{\mathbf{u}}_j\rangle\right|\right)\right) \mathrm{\mathbf{u}}_j,\mathrm{\mathbf{u}}_j\right\rangle\right\},
\end{align*}
for every   positive matrix $A\in\mathbb{M}^+_n$,   where the minimum is taken over all choices of orthonormal  system  of vectors $\{\mathrm{\mathbf{u}}_1,\cdots,\mathrm{\mathbf{u}}_k\}$.
\end{proposition}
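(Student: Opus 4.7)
The plan is to reduce the proposition to the pointwise superquadratic Jensen inequality \eqref{qkd}, which is itself a consequence of Lemma \ref{lm5} applied to the vector state $\tau_{\mathrm{\mathbf{u}}}(X) = \langle X\mathrm{\mathbf{u}},\mathrm{\mathbf{u}}\rangle$ on $\mathbb{M}_m$, and then sum along an orthonormal basis that diagonalizes $\Phi(A)$.

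First, I would record the pointwise estimate: for every unit vector $\mathrm{\mathbf{u}}$ in the codomain $\mathbb{C}^m$,
\[
f(\langle \Phi(A)\mathrm{\mathbf{u}},\mathrm{\mathbf{u}}\rangle)\leq \langle \Phi(f(A))\mathrm{\mathbf{u}},\mathrm{\mathbf{u}}\rangle-\langle \Phi(f(|A-\langle \Phi(A)\mathrm{\mathbf{u}},\mathrm{\mathbf{u}}\rangle|))\mathrm{\mathbf{u}},\mathrm{\mathbf{u}}\rangle.
\]
This is precisely \eqref{qkd}; note that positivity of $A$ together with positivity of $\Phi$ keeps both arguments of $f$ inside $[0,\infty)$, so the expression is well-defined.

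Second, since $\Phi(A)\in\mathbb{M}_m$ is Hermitian, I would fix an orthonormal basis $\{\mathrm{\mathbf{v}}_1,\ldots,\mathrm{\mathbf{v}}_m\}$ of $\mathbb{C}^m$ of eigenvectors of $\Phi(A)$ with eigenvalues $\lambda_1,\ldots,\lambda_m$. Then $\langle \Phi(A)\mathrm{\mathbf{v}}_j,\mathrm{\mathbf{v}}_j\rangle=\lambda_j$, and by the spectral theorem together with unitary invariance of the trace,
\[
\sum_{j}f(\lambda_j)=\mathrm{Tr}f(\Phi(A)),\qquad \sum_{j}\langle \Phi(f(A))\mathrm{\mathbf{v}}_j,\mathrm{\mathbf{v}}_j\rangle=\mathrm{Tr}\Phi(f(A)).
\]
Applying the pointwise inequality at $\mathrm{\mathbf{u}}=\mathrm{\mathbf{v}}_j$ and summing over $j$ yields
\[
\mathrm{Tr}f(\Phi(A))\leq \mathrm{Tr}\Phi(f(A))-\sum_{j}\langle \Phi(f(|A-\langle \Phi(A)\mathrm{\mathbf{v}}_j,\mathrm{\mathbf{v}}_j\rangle|))\mathrm{\mathbf{v}}_j,\mathrm{\mathbf{v}}_j\rangle.
\]

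Third, the spectral basis $\{\mathrm{\mathbf{v}}_j\}$ is one particular admissible orthonormal system. Consequently the sum appearing on the right is bounded below by the infimum of the analogous sum taken over all orthonormal systems. Replacing it by this minimum only enlarges the right-hand side and produces the stated bound.

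There is no serious obstacle here: the argument is essentially bookkeeping once the vector-state inequality \eqref{qkd} (or equivalently Lemma \ref{lm5}) is in force. The only subtlety is matching the indexing of the sum with the dimension of the target of $\Phi$ and confirming that the $\mathrm{\mathbf{v}}_j$'s are a valid orthonormal system entering into the minimization, which both follow directly from the spectral decomposition of $\Phi(A)$.
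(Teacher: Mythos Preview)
Your argument is correct and follows the same route as the paper: apply the pointwise superquadratic Jensen inequality \eqref{qkd} to each vector of an eigenbasis of $\Phi(A)$, sum, and then pass to the minimum over orthonormal systems. The only cosmetic difference is that the paper phrases the summation step via Lemma~\ref{lm7}, whereas you observe directly that summing $\langle \Phi(f(A))\mathrm{\mathbf{v}}_j,\mathrm{\mathbf{v}}_j\rangle$ over a full orthonormal basis already gives $\mathrm{Tr}\,\Phi(f(A))$ on the nose; both are valid.
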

\begin{proof}
  Assume that $\lambda_1,\ldots,\lambda_n$ are eigenvalues of $\Phi(A)$ and $\{\mathrm{\mathbf{u}}_1,\cdots, \mathrm{\mathbf{u}}_n\}$ is  orthonormal  system  of corresponding eigenvectors of $\Phi(A)$. Then
  \begin{align*}
   \mathrm{Tr}f\left(\Phi(A)\right)&=\sum_{j=1}^{n}f\left(\lambda_j(\Phi(A))\right)\\
   &=\sum_{j=1}^{n}f\left(\left\langle \Phi(A) \mathrm{\mathbf{u}}_j,\mathrm{\mathbf{u}}_j\right\rangle\right)\\
   &\leq \sum_{j=1}^{n}\left[\left\langle \Phi(f(A)) \mathrm{\mathbf{u}}_j,\mathrm{\mathbf{u}}_j\right\rangle-\left\langle \Phi\left(f\left(\left|A-\langle \Phi(A)\mathrm{\mathbf{u}}_j,\mathrm{\mathbf{u}}_j\right\rangle\right|\right)  \mathrm{\mathbf{u}}_j,\mathrm{\mathbf{u}}_j\right\rangle\right]\qquad(\mbox{by \eqref{qkd}})\\
   &\leq \mathrm{Tr}\ \Phi(f(A))  -\sum_{j=1}^{n}\left\langle \Phi\left(f\left(\left|A-\langle \Phi(A)\mathrm{\mathbf{u}}_j,\mathrm{\mathbf{u}}_j\rangle\right|\right)\right) \mathrm{\mathbf{u}}_j,\mathrm{\mathbf{u}}_j\right\rangle,
  \end{align*}
  in which the last inequality follows from Lemma \ref{lm7}. This completes the proof.
\end{proof}


\section{ Klein  inequality}

In this section, we present a Klein  trace inequality for superquadratic functions. In particular, we show that if $f$ is non-negative, a refinement of the Klein  inequality \eqref{eq4.2} holds. The next lemma can be found in \cite{bh}.
\begin{lemma}\label{lm3}\cite{bh}
 If $X,Y\in\mathbb{M}_n$ are Hermitian matrices, then  the inequality
	\begin{align}\label{eq2.12}
	\mathrm{Tr} XY\leq  \langle \lambda^\downarrow(X),\lambda^\downarrow(Y)\rangle
	\end{align}
	holds.
\end{lemma}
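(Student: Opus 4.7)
The plan is to reduce the inequality to a rearrangement statement for doubly stochastic matrices via the spectral theorem. First I would diagonalize both Hermitian matrices: write $X = U\Lambda U^{*}$ and $Y = V M V^{*}$, where $\Lambda = \mathrm{diag}(\lambda^{\downarrow}(X))$ and $M = \mathrm{diag}(\lambda^{\downarrow}(Y))$, and $U,V$ are unitary. Setting $W = U^{*}V$ and using the cyclicity of the trace,
\begin{align*}
\mathrm{Tr}(XY) = \mathrm{Tr}(\Lambda\, W M W^{*}) = \sum_{i,j=1}^{n} \lambda_i^{\downarrow}(X)\,\lambda_j^{\downarrow}(Y)\,|W_{ij}|^{2}.
\end{align*}

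Next I would note that since $W$ is unitary, the matrix $P=[|W_{ij}|^{2}]$ is doubly stochastic: each of its row sums and column sums equals $1$. Hence the task reduces to showing that for any doubly stochastic matrix $P=[p_{ij}]$ and any two non-increasing sequences $a_1\geq\cdots\geq a_n$ and $b_1\geq\cdots\geq b_n$,
\begin{align*}
\sum_{i,j=1}^{n} p_{ij}\, a_i\, b_j \;\leq\; \sum_{i=1}^{n} a_i\, b_i.
\end{align*}

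For this I would invoke Birkhoff's theorem, which expresses $P$ as a convex combination $P=\sum_{k} c_k P_{\sigma_k}$ of permutation matrices, and then apply the classical rearrangement inequality: for each permutation $\sigma_k$,
\begin{align*}
\sum_{i=1}^{n} a_i\, b_{\sigma_k(i)} \;\leq\; \sum_{i=1}^{n} a_i\, b_i,
\end{align*}
because both sequences are arranged in the same (decreasing) order. Averaging with weights $c_k$ yields the desired estimate, and substituting back gives $\mathrm{Tr}(XY)\leq \langle \lambda^{\downarrow}(X), \lambda^{\downarrow}(Y)\rangle$.

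There is no real obstacle here; the only subtle point is verifying that $[|W_{ij}|^{2}]$ is doubly stochastic (an immediate consequence of $WW^{*}=W^{*}W=I$) and recognizing that the rearrangement inequality is what produces equal-order pairing as the maximizer. An alternative route, if one prefers to avoid Birkhoff, is to prove the scalar inequality directly by a two-by-two exchange argument on pairs $(i,j)$ with $i<j$, showing that any off-diagonal mass in $P$ can be shifted toward the diagonal without decreasing $\sum p_{ij}a_i b_j$ under the decreasing-order assumption.
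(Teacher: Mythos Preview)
Your argument is correct and is the standard proof of this trace inequality. Note, however, that the paper does not give its own proof of this lemma; it simply quotes the result from Bhatia's \emph{Matrix Analysis}, so there is nothing in the paper to compare against beyond the citation. The route you outline---spectral decomposition, reduction to a bilinear form over the doubly stochastic matrix $[|W_{ij}|^{2}]$, and then Birkhoff plus the classical rearrangement inequality---is precisely the argument one finds in that reference.
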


 The main result of this section is  the following    Klein  inequality for superquadratic functions.

\begin{theorem}[Klein's Inequality for superquadratic functions]\label{thm1}
	Assume that $f:[0,\infty)\to\mathbb{R}$ is a differentiable superquadratic function with $f(0)=f'(0)=0$. Then
{\small\begin{align}\label{eq2.110}
  \mathrm{Tr}[f(A)-f(B)-(A-B)f'(B)]\geq \min\left\{\sum_{j=1}^{n}f(|x-y|); x\in\sigma(A),\ y\in\sigma(B)\right\}
\end{align}}
 for all $A,B\in\mathbb{M}_n^+$ in which $\sigma(A)$ is the set of eigenvalues of $A$.
In particular, if $f$ is non-negative,  then
	\begin{align}\label{eq2.10}
	\mathrm{Tr}[f(A)-f(B)-(A-B)f'(B)]\geq \mathrm{Tr}f(|A-B|)
	\end{align}
for all $A,B\in\mathbb{M}_n^+$.
\end{theorem}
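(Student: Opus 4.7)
The plan is to apply the scalar superquadratic inequality through functional calculus, plugging the eigenvalues of $B$ into the ``anchor'' variable $s$. Since $f$ is differentiable with $f(0)=f'(0)=0$, Lemma \ref{lm44}(ii) yields $C_s=f'(s)$ in \eqref{eq1.3}, so
\begin{equation*}
f(t)\geq f(s)+f'(s)(t-s)+f(|t-s|),\qquad t,s\geq 0.
\end{equation*}
Fix spectral decompositions $A=\sum_{i=1}^{n}\lambda_iu_iu_i^*$ and $B=\sum_{j=1}^{n}\mu_jv_jv_j^*$. For each $j$, substituting $s=\mu_j$ and applying functional calculus in $t$ to the positive matrix $A$ yields the operator inequality
\begin{equation*}
f(A)\geq f(\mu_j)I+f'(\mu_j)(A-\mu_jI)+f(|A-\mu_jI|).
\end{equation*}

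Then I would take the quadratic form $\langle\,\cdot\,v_j,v_j\rangle$ of both sides and sum over $j$. The left side collapses to $\mathrm{Tr}\,f(A)$; using $Bv_j=\mu_jv_j$ and $f'(B)v_j=f'(\mu_j)v_j$, the first two terms on the right yield $\mathrm{Tr}\,f(B)$ and $\mathrm{Tr}[(A-B)f'(B)]$. Expanding the remaining term via $A=\sum_i\lambda_iu_iu_i^*$ and setting $p_{ij}=|\langle u_i,v_j\rangle|^2$, which is doubly stochastic, I arrive at
\begin{equation*}
\mathrm{Tr}[f(A)-f(B)-(A-B)f'(B)]\geq\sum_{i,j=1}^{n}p_{ij}\,f(|\lambda_i-\mu_j|).
\end{equation*}
Birkhoff's theorem expresses $(p_{ij})$ as a convex combination of permutation matrices, so the right side is at least $\min_{\sigma}\sum_{j}f(|\lambda_{\sigma(j)}-\mu_j|)$, which is the general bound \eqref{eq2.110}.

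For the non-negative case \eqref{eq2.10}, Lemma \ref{lm44}(iii) makes $f$ convex and increasing with $f(0)=0$, so $t\mapsto f(|t|)$ is convex on $\mathbb{R}$. The rearrangement inequality for convex functions of differences identifies the similarly-ordered pairing as the minimizer,
\begin{equation*}
\min_{\sigma}\sum_{j}f(|\lambda_{\sigma(j)}-\mu_j|)=\sum_{j}f(|\lambda^\downarrow_j(A)-\lambda^\downarrow_j(B)|).
\end{equation*}
Combining this with \eqref{jgg}, $|\lambda^\downarrow(A-B)|\prec|\lambda^\downarrow(A)-\lambda^\downarrow(B)|$, and Lemma \ref{lm1}(ii) applied to the non-negative convex $f$ gives $\mathrm{Tr}\,f(|A-B|)\leq\sum_{j}f(|\lambda^\downarrow_j(A)-\lambda^\downarrow_j(B)|)$, closing the chain.

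The hard part will be the non-negative step: translating the permutation minimum into $\mathrm{Tr}\,f(|A-B|)$ requires blending the rearrangement inequality (to identify the sorted pairing as optimal for $f(|\cdot|)$) with the eigenvalue majorization \eqref{jgg} (to bridge from sorted pairwise differences to the eigenvalues of $A-B$). Neither of these is directly supplied by the superquadratic machinery, and one must verify that the rearrangement argument applies to $f(|\cdot|)$ on $\mathbb{R}$ rather than to $f$ on its natural domain $[0,\infty)$.
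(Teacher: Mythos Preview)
Your proof is correct but follows a genuinely different route from the paper's. The paper works at the level of eigenvalues throughout: it orders $\lambda_j=\lambda_j^\downarrow(A)$ decreasingly and $\mu_j$ so that $f'(\mu_j)$ is decreasing, then invokes the trace inequality $\mathrm{Tr}\,Af'(B)\le\langle\lambda^\downarrow(A),\lambda^\downarrow(f'(B))\rangle$ (Lemma~\ref{lm3}) to obtain $\mathrm{Tr}[(A-B)f'(B)]\le\sum_j(\lambda_j-\mu_j)f'(\mu_j)$, and combines this with the scalar superquadratic inequality summed over $j$. This yields one particular pairing $\sum_j f(|\lambda_j-\mu_j|)$, which trivially dominates the minimum in~\eqref{eq2.110}. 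In the non-negative case the monotonicity of $f'$ forces this pairing to be the similarly-ordered one, and the passage to $\mathrm{Tr}\,f(|A-B|)$ then comes directly from the majorization step already carried out in the proof of Theorem~\ref{lemma3}.

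Your approach instead applies functional calculus against the eigenvectors of $B$, computing $\mathrm{Tr}[(A-B)f'(B)]$ \emph{exactly} rather than bounding it, and produces the sharper intermediate estimate $\sum_{i,j}p_{ij}f(|\lambda_i-\mu_j|)$ with the unistochastic matrix $p_{ij}=|\langle u_i,v_j\rangle|^2$; Birkhoff then gives the permutation minimum. This is cleaner and avoids Lemma~\ref{lm3} entirely. The cost is that for~\eqref{eq2.10} you must supply the rearrangement inequality for the convex even function $t\mapsto f(|t|)$ to identify the similarly-ordered pairing as the minimizer, whereas the paper gets that pairing for free from the monotonicity of $f'$. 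Both routes finish with the same majorization $|\lambda^\downarrow(A-B)|\prec|\lambda^\downarrow(A)-\lambda^\downarrow(B)|$ and Lemma~\ref{lm1}(ii).
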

\begin{proof}
	First we prove \eqref{eq2.10}. Suppose that $\lambda_j$ and $\mu_j$ \ $(j=1,\cdots,n)$ are eigenvalues of $A$ and $B$, respectively, arranged in decreasing order. If  $f$ is non-negative,   then $f'$ is a monotone increasing function  by Lemma \ref{lm44} and so $f'(\mu_j)$\ $(j=1,\cdots,n)$ are eigenvalues of $f(B)$ arranged in decreasing order.  Hence
	\begin{align*}
	\mathrm{Tr}(A-B)f'(B)&=\mathrm{Tr}\ Af'(B)-\mathrm{Tr}\ Bf'(B)\\
	&=\mathrm{Tr}\ Af'(B)-\sum_{j=1}^{n}\mu_jf'(\mu_j)\\
	&\leq \sum_{j=1}^{n}\lambda_jf'(\mu_j)-\sum_{j=1}^{n}\mu_jf'(\mu_j)\qquad\mbox{by \eqref{eq2.12}}\\
	&=\sum_{j=1}^{n}(\lambda_j-\mu_j)f'(\mu_j).
	\end{align*}
Moreover, it follows from proof of Theorem \ref{lemma3} that
\begin{align}
\mathrm{Tr} f(|A-B|)\leq \sum_{j=1}^{n}f\left(\left|\lambda_j-\mu_j\right|\right).
\end{align}
Note that if a   superquadratic function $f$ is  differentiable  on $(0,\infty)$ and $f(0)=f'(0)=0$, then   Lemma \ref{lemma1} implies that
	\begin{align*}
	f(t)\geq f(s)+f'(s)(t-s)+f(|t-s|)
	\end{align*}
	for all $s,t\geq0$. This gives
	\begin{align*}
	f(\lambda_j)\geq f(\mu_j)+f'(\mu_j)(\lambda_j-\mu_j)+f(|\lambda_j-\mu_j|)\qquad (j=1,\cdots,n)
	\end{align*}
	and so
	\begin{align}\label{eq2.13}
	\sum_{j=1}^{n} f(\lambda_j)\geq  \sum_{j=1}^{n} f(\mu_j)+ \sum_{j=1}^{n}f'(\mu_j)(\lambda_j-\mu_j)+ \sum_{j=1}^{n}f(|\lambda_j-\mu_j|),
	\end{align}
which proves \eqref{eq2.10}.  In general case, when $f$ is not assumed to be non-negative, we suppose that  $\lambda_j$  \ $(j=1,\cdots,n)$ are eigenvalues of $A$   arranged in decreasing order and $\mu_j$ \ $(j=1,\cdots,n)$ are eigenvalues  of $B$, arranged in such a way that $f'(\mu_1)\geq\cdots \geq f'(\mu_n)$.  By a same argument as in the first part of the proof, this guarantees the inequality $\mathrm{Tr}(A-B)f'(B)\leq \sum_{j=1}^{n}(\lambda_j-\mu_j)f'(\mu_j)$. It follows from \eqref{eq2.13} that
\begin{align*}
  \mathrm{Tr} f(A)\geq \mathrm{Tr} f(B) + \mathrm{Tr}(A-B)f'(B)+\sum_{j=1}^{n}f(|\lambda_j-\mu_j|),
\end{align*}
from which we get \eqref{eq2.110}.
 \end{proof}

When the superquadratic function $f$ is non-negative,  then Theorem~\ref{thm1} gives a refinement of the Klein's inequality \eqref{eq4.2} for convex functions. Indeed, if $f\geq0$, then
{\small \begin{align*}
	0\leq \mathrm{Tr}[f(A)-f(B)-(A-B)f'(B)-f(|A-B|)]\leq\mathrm{Tr}[f(A)-f(B)-(A-B)f'(B)].
	\end{align*}}

\begin{example}\label{ex1}
 The function $f(t)=t^p$ is superquadratic  for every  $p\geq2$.     Theorem~\ref{thm1} gives
  \begin{align*}
    0\leq \mathrm{Tr}[A^p-B^p-p(A-B)B^{p-1}-|A-B|^p]\leq\mathrm{Tr}[A^p-B^p-p(A-B)B^{p-1}]
  \end{align*}
  for  all $A,B\in\mathbb{M}_n^+$ and every $p\geq2$.

  As a simple example, assume that $p=3$ and  consider the positive matrices
  \begin{align*}
   A=\left[\begin{array}{cc}
    2 & 1\\ 1&2
    \end{array}\right]\quad\mbox{and}\quad   B=\left[\begin{array}{cc}
    2 & 0\\ 0&0
    \end{array}\right].
  \end{align*}
  Then
  \begin{align*}
\mathrm{Tr}[A^p-B^p-p(A-B)B^{p-1}]=20 \quad\mbox{and}\quad \mathrm{Tr} |A-B|^p\simeq 14.15.
  \end{align*}
\end{example}

On the other hand, if $f\geq0$ is a convex function and   $-f$ is a   superquadratic function,  then Theorem \ref{thm1} provides an upper bound for the Klein's Inequality. Applying Theorem \ref{thm1} to the superquadratic function $-f$ we obtain
\begin{align}\label{re22}
\mathrm{Tr}[f(A)-f(B)-(A-B)f'(B)]\leq \max\left\{\sum_{j=1}^{n}f(|x-y|); x\in\sigma(A),\ y\in\sigma(B)\right\}
\end{align}
 for  all $A,B\in\mathbb{M}_n^+$, while the left side is positive due to the Klein's Inequality for the convex function $f$.

\begin{example}
If $1\leq p\leq2$, then  the function $f(t)=t^p$ is convex and $-f(t)=-t^p$ is  superquadratic. It follows from \eqref{re22} that
\begin{align*}
\mathrm{Tr}[A^p-B^p-p(A-B)B^{p-1}]\leq \max\left\{\sum_{j=1}^{n}|x-y|^p; x\in\sigma(A),\ y\in\sigma(B)\right\},
\end{align*}
 for  all $A,B\in\mathbb{M}_n^+$ and every $1\leq p\leq2$.

 To see a simple example, let $p=3/2$  and consider the two matrices in Example \ref{ex1}. Then
 \begin{align*}
\mathrm{Tr}[A^p-B^p-p(A-B)B^{p-1}]\simeq3.36 \quad\mbox{and}\quad \max\left\{\sum_{j=1}^{n}|x-y|^p; x\in\sigma(A),\ y\in\sigma(B)\right\}\simeq6.19.
 \end{align*}
\end{example}

\bigskip

  If $f$ is a continuous convex function, then
$f(\langle A\mathrm{\mathbf{u}},\mathrm{\mathbf{u}}\rangle)\leq\langle f(A)\mathrm{\mathbf{u}},\mathrm{\mathbf{u}}\rangle$
for every unit vector $  \mathrm{\mathbf{u}}\in\mathbb{C}^n$, see \cite{FMPS}.  If $\{\mathrm{\mathbf{u}}_1,\cdots,\mathrm{\mathbf{u}}_n\}$ is an  orthonormal  basis   of $\mathbb{C}^n$, then
\begin{align*}
\sum_{j=1}^{n}f(\langle A\mathrm{\mathbf{u}}_j,\mathrm{\mathbf{u}}_j\rangle)&\leq \sum_{j=1}^{n} \langle f(A)\mathrm{\mathbf{u}}_j,\mathrm{\mathbf{u}}_j\rangle)\\
&\leq \sum_{j=1}^{n} \lambda_j(f(A))\qquad\mbox{by \eqref{eq2.6}}\\
&=\mathrm{Tr} f(A).
\end{align*}
In other words,
\begin{align}\label{eq2.7}
\sum_{j=1}^{n}f(\langle A\mathrm{\mathbf{u}}_j,\mathrm{\mathbf{u}}_j\rangle) \leq \mathrm{Tr} f(A).
\end{align}
for every orthonormal  basis $\{\mathrm{\mathbf{u}}_1,\cdots,\mathrm{\mathbf{u}}_n\}$ of $\mathbb{C}^n$. Inequality \eqref{eq2.7} is known as the \textit{Peierls inequality}. The equality holds in \eqref{eq2.7} when $\mathrm{\mathbf{u}}_i$'s are eigenvectors of $A$.

We present a variant of the Peierls inequality in the case when $f$ is a superquadratic function. It gives in particular a refinement of the Peierls inequality if $f$ is non-negative.
\begin{proposition}
\label{prp1}	Assume that $f$ is a superquadratic function. If $A\in\mathbb{M}^+_n$, then
	\begin{align}\label{eq2.8}
	\sum_{j=1}^{n}f(\langle A\mathrm{\mathbf{u}}_j,\mathrm{\mathbf{u}}_j\rangle)+\sum_{j=1}^{n}\langle f(|A-\langle A\mathrm{\mathbf{u}}_j,\mathrm{\mathbf{u}}_j\rangle|)\mathrm{\mathbf{u}}_j,\mathrm{\mathbf{u}}_j\rangle  \leq \mathrm{Tr} f(A)
	\end{align}
	for every orthonormal  basis $\{\mathrm{\mathbf{u}}_1,\cdots,\mathrm{\mathbf{u}}_n\}$ of $\mathbb{C}^n$. Equality holds if $f$ is non-negative and  $\mathrm{\mathbf{u}}_i$'s are eigenvectors of $A$.
\end{proposition}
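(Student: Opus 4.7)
The plan is to derive \eqref{eq2.8} as a direct orthonormal-basis sum of the pointwise refinement of Jensen's inequality recorded in \eqref{qk}. Fix an orthonormal basis $\{\mathbf{u}_1,\ldots,\mathbf{u}_n\}$ of $\mathbb{C}^n$. For each $j$, the inequality \eqref{qk} applied at the unit vector $\mathbf{u}_j$ gives
$$
f(\langle A\mathbf{u}_j,\mathbf{u}_j\rangle)
\;\le\;
\langle f(A)\mathbf{u}_j,\mathbf{u}_j\rangle
-\langle f(|A-\langle A\mathbf{u}_j,\mathbf{u}_j\rangle|)\mathbf{u}_j,\mathbf{u}_j\rangle.
$$
Summing these $n$ inequalities and rearranging yields exactly \eqref{eq2.8}, provided one identifies $\sum_{j=1}^{n}\langle f(A)\mathbf{u}_j,\mathbf{u}_j\rangle$ with $\mathrm{Tr}\,f(A)$.

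The latter identification is immediate from the basis-independence of the canonical trace: when $\{\mathbf{u}_j\}$ is a full orthonormal basis of $\mathbb{C}^n$, one has $\sum_j\langle X\mathbf{u}_j,\mathbf{u}_j\rangle=\mathrm{Tr}\,X$ for every $X\in\mathbb{M}_n$, applied here to $X=f(A)$. No appeal to Lemma \ref{lm7} (or to any majorisation argument) is needed, because the orthonormal system here is a basis rather than a partial system, so that the Peierls-type upper bound step from the convex case is replaced by an exact equality.

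For the equality claim, suppose in addition that $f\ge 0$ and that $\{\mathbf{u}_j\}$ is an eigenbasis of $A$ with $A\mathbf{u}_j=\lambda_j\mathbf{u}_j$. Lemma \ref{lm44}(iii) provides $f(0)=0$. Using the spectral decomposition $A=\sum_k\lambda_k\mathbf{u}_k\mathbf{u}_k^{*}$, one gets $|A-\lambda_j I|=\sum_k|\lambda_k-\lambda_j|\,\mathbf{u}_k\mathbf{u}_k^{*}$, whence $f(|A-\lambda_j I|)\mathbf{u}_j=f(|\lambda_j-\lambda_j|)\mathbf{u}_j=f(0)\mathbf{u}_j=0$. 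Hence each summand of the second sum in \eqref{eq2.8} vanishes, while the first sum collapses to $\sum_{j=1}^{n}f(\lambda_j)=\mathrm{Tr}\,f(A)$, giving equality.

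There is no real obstacle: the whole argument is a one-line reduction to \eqref{qk}, which is already established in the excerpt. The only point demanding a line of care is the equality case, where one must invoke $f(0)=0$ from Lemma \ref{lm44}(iii) to make the correction terms $\langle f(|A-\lambda_j I|)\mathbf{u}_j,\mathbf{u}_j\rangle$ vanish on the eigenbasis of $A$.
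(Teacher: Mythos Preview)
Your proof is correct and follows essentially the same route as the paper: apply \eqref{qk} at each $\mathbf{u}_j$ and sum. The only differences are cosmetic. The paper cites \eqref{eq2.6} (Lemma~\ref{lm7}) to pass from $\sum_j\langle f(A)\mathbf{u}_j,\mathbf{u}_j\rangle$ to $\mathrm{Tr}\,f(A)$, but as you rightly observe, for a full orthonormal basis this is an exact identity and no maximal principle is required. For the equality case the paper sandwiches via the classical Peierls inequality \eqref{eq2.7} (which becomes an equality on an eigenbasis), whereas you compute the correction terms directly and kill them with $f(0)=0$; both arguments are valid and equally short.
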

\begin{proof}
	Let $f$ be a superquadratic function.  We apply the Jensen's operator inequality  \eqref{qk}  and then we use \eqref{eq2.6}. This gives \eqref{eq2.8}.

If $f$ is non-negative, then
	\begin{align*}
	\sum_{j=1}^{n}f(\langle A\mathrm{\mathbf{u}}_j,\mathrm{\mathbf{u}}_j\rangle)\leq \sum_{j=1}^{n}f(\langle A\mathrm{\mathbf{u}}_j,\mathrm{\mathbf{u}}_j\rangle)+\sum_{j=1}^{n}\langle f(|A-\langle A\mathrm{\mathbf{u}}_j,\mathrm{\mathbf{u}}_j\rangle|)\mathrm{\mathbf{u}}_j,\mathrm{\mathbf{u}}_j\rangle  \leq \mathrm{Tr} f(A).
	\end{align*}
Hence, choosing  $\mathrm{\mathbf{u}}_i$'s to be the eigenvectors of $A$, gives the equality in  \eqref{eq2.7} and so in  \eqref{eq2.8}
\end{proof}


\vspace{0.2cm}

\end{document}